\documentclass{article}
\usepackage{graphicx}%
\usepackage{multirow}%
\usepackage{amsmath,amssymb,amsfonts}%
\usepackage{amsthm}%
\usepackage[title]{appendix}%

\usepackage{xcolor}%
\usepackage{textcomp}%
\usepackage{manyfoot}%
\usepackage{booktabs}%
\usepackage{algorithm}%
\usepackage{authblk}
\usepackage{algorithmicx}%
\usepackage{algpseudocode}%
\usepackage{listings}%
\usepackage{tikz-cd}
\usepackage{lineno}
\usepackage{mathtools}
\usepackage{stackengine}
\usepackage{enumitem}
\usepackage{float}

\theoremstyle{plain}%
\newtheorem{theorem}[subsection]{Theorem}

\newtheorem{example}[subsection]{Example}
\newtheorem{remark}[subsection]{Remark}
\newtheorem{lemma}[subsection]{Lemma}
\newtheorem{corollary}[subsection]{Corollary}
 \newtheorem{definition}[subsection]{Definition}
 \numberwithin{equation}{section}
 \numberwithin{figure}{section}
 
\newcommand{\imply}{$\longrightarrow$}

\newcommand{\wmap }{\textit{witness-map }}
\newcommand{\w}{\textit{witness }}

\title{A General Framework for  $Q\text{--}P$ Spaces and Intermediate Hausdorff-Type Structures}
\author{Biswajit Mitra }
\affil{Department of Mathematics, The University of Burdwan, Burdwan 713104, West Bengal, India \\ bmitra@math.buruniv.ac.in}

\date{}

\author{Prodipta Ghosh\thanks{The second author Mr. Prodipta Ghosh gratefully acknowledge the financial support provided by the University of Burdwan through its University Research Fellowship (State Funded).}}
\affil{Department of Mathematics, The University of Burdwan, Burdwan 713104, West Bengal, India \\ math.prodipta@gmail.com}

\begin{document}

\maketitle
\begin{abstract}
We introduce a unified framework of  $Q\text{--}P$ spaces, designed to encompass a broad class of generalized topological structures arising from the interaction of two topological properties. We establish general permanence and transfer results, including heredity, productivity, additivity, and separation properties. Specializing to the Hausdorff setting, we investigate the intermediate classes \(C\)-Hausdorff, \(PS\)-Hausdorff, and \(CC\)-Hausdorff, and obtain the hierarchy
\[
T_2 \Longrightarrow Ps\text{-}Hausdorff
\Longrightarrow CC\text{-}Hausdorff
\Longrightarrow C\text{-}Hausdorff
\Longrightarrow T_1.
\]
For \(C\)-Hausdorff spaces, we derive intrinsic characterizations and establish the equivalence with the simultaneous properties of \(k\)-Hausdorffness and \(KC\). We further examine their connections with \(k\)-spaces, Fréchet spaces, local compactness, realcompactness, and \(\aleph_0\)-boundedness. Finally, carefully constructed examples and counterexamples demonstrate the strictness of the hierarchy and the necessity of the hypothesis. Thus, the \(Q\!-\!P\) framework provides a unified perspective on generalized separation axioms and reveals a rich structure between classical Hausdorffness and \(T_1\)-separation.

\medskip
\noindent\textbf{Keywords:}
$Q$-$P$ spaces; C-Hausdorff spaces; $Ps$-Hausdorff spaces;
$CC$-Hausdorff spaces; separation axioms; compactness;
pseudocompactness; countable compactness.
\end{abstract}


\section{Introduction}  

The concept of $C$-normality, a generalization of normality, was defined by 

\noindent Arhangel'skii during his visit to the Department of Mathematics at King Abdulaziz University, Saudi Arabia, in 2012, as was mentioned at the beginning of the introduction of the paper \cite{za22}. Later in 2017, Alzahrani and Kalantan, studied the C-normal property in \cite{ak17}. Based on  Arhangel'skii's such seminal idea, variant of topological spaces have been investigated by a group of mathematicians around the globe. For instance, , $C$-regular\cite{a18}, $CC$-regular \cite{za22}, as generalization of regularity, $C$-Tychonoff, $L$-Tychonoff \cite{sa18}, $Ps$-Tychonoff \cite{bmp20} , as generalizations of Tychonoff spaces, $B$-normal, $L$-normal \cite{ks17}, $Ps$-normal \cite{bmp20}, $CC$-normal \cite{bmr20}, \cite{bmr26} , as generalization of normality have been studied. Motivated by the structural similarities among these spaces, we reframe Arhangel'skii's definition in a more general setting.

\textit{Let $P, Q$ be topological properties. A space $X$ is called a  $Q\text{--}P$ space, if there exists a space $Y$ satisfying property $P$ and a bijection $f:X\to Y$ such that $f|_C : C\to f(C)$ is a homeomorphism for any subspace $C \subseteq X$ satisfying the property $Q$.} 

These spaces are summarized in Table 1 below.
\begin{table}[H]
    \centering
    \begin{tabular}{|c|c|c|}\hline
         $Q$&  $P$&  $Q\text{--}P$\\\hline
         Compact&  Normal& $C$-normal\\\hline
 Countably Compact& Normal&$CC$-normal\\\hline
         $\aleph_0$-bounded&  Normal& $B$-normal\\ \hline
 Pseudocompact& Normal&$Ps$-normal\\\hline
 Lindelof& Normal&L-normal\\\hline
 Lindelof& Hausdorff&L-Hausdorff\\\hline
 Compact& Tychonoff&$C$-Tychonoff\\\hline
 Lindelof& Tychonoff&$L$-Tychonoff\\\hline
  Compact& Metrizable&$C$-Metrizable\\\hline
   Lindelof& Metrizable&$L$-Metrizable\\\hline
Countably compact& Metrizable&$CC$-Tychonoff\\\hline
 Pseudocompact& Metrizable&$Ps$-Metrizable\\\hline
    \end{tabular}
 \caption{ Examples of  $Q\text{--}P$-spaces}
    \label{table}
\end{table}
 

In this paper, we investigate three intermediate layers of spaces  in between Hausdorff and $T_1$.  In analogy with the existing terminology,, we call them $C$-Hausdorff (i.e., $Q$ = compact and $P$= Hausdorff),  $Ps$-Hausdorff (i.e., $Q$ = pseudocompact and $P$ = Hausdorff), $CC$-Hausdorff (i.e., $Q $ = Countably Compact and $P$ = Hausdorff)

The remainder of the paper is organized as follows:

In Section 2, we first investigate  $Q\text{--}P$ spaces. In section 3, we focus on C-Hausdorff spaces and study their properties and their connections with $Ps$-Hausdorff and $CC$-Hausdorff. Finally in the last section, we provide a few examples and counterexamples.

\section{Q-P Spaces and Their Topological Properties}

We recall the formal definition of a  $Q\text{--}P$ space along with related terminology. 

\begin{definition} \label{pg1}
Let $(X, \tau)$ be a topological space, and let $P$ and $Q$ be topological properties. Then $X$ is said to be a  $Q\text{--}P$ space (or simply,  $Q\text{--}P$), if there exists a topological space  $(Y,\sigma)$ with property $P$  and a bijective function $f:X \rightarrow Y$ such that \vspace{.2cm}
\begin{equation}\label{2.1a}
\begin{aligned}
f|_C:(C,\tau_C)&\longrightarrow
\bigl(f(C),\sigma_{f(C)}\bigr)
\quad\text{is a homeomorphism for every subset  }\\
&\text{$C$ of $X$ which satisfies the property $Q$.}
\end{aligned}
\end{equation}
\vspace{.2cm}
 Henceforth, we refer to any bijection $f$ from $X \to Y$ satisfying the above property (\ref{2.1a}) as \wmap and we call $Y$ as \w space for $X$. 
In what follows, we omit the associated topology if no confusion arises.
\end{definition}. 

The following results are immediate and can be easily verified. 

\begin{theorem}\label{pg1.1}  The following statements are true.
\begin{enumerate}
\item Every space admitting the topological property $P$ is also a  $Q\text{--}P$.

\item If the property $P$ is hereditary, every subspace of a  $Q\text{--}P$ space satisfying the property $Q$ also satisfies the property $P$.
\end{enumerate}
\end{theorem}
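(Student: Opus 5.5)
Both parts follow directly from Definition \ref{pg1}, so the plan is to exhibit a witness in the first case and transport property $P$ along a witness-map in the second.

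For part (1), suppose $(X,\tau)$ has property $P$. I will take the witness space to be $(Y,\sigma)=(X,\tau)$ itself and the witness-map to be the identity $f=\mathrm{id}_X$. This $f$ is a bijection. For any subset $C\subseteq X$, whether or not it has property $Q$, the restriction $f|_C:(C,\tau_C)\to(C,\tau_C)$ is the identity map of a subspace, hence a homeomorphism. Thus (\ref{2.1a}) holds, and $X$ is a $Q\text{--}P$ space.

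For part (2), let $X$ be a $Q\text{--}P$ space, with witness space $(Y,\sigma)$ having property $P$ and witness-map $f:X\to Y$. Let $C\subseteq X$ be a subspace with property $Q$. Since $P$ is hereditary, the subspace $(f(C),\sigma_{f(C)})$ of $Y$ has property $P$. By (\ref{2.1a}), $f|_C:(C,\tau_C)\to(f(C),\sigma_{f(C)})$ is a homeomorphism. Property $P$ is topological, meaning it is invariant under homeomorphism, so $(C,\tau_C)$ also has property $P$.

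There is no genuine obstacle. The only point needing care is that "hereditary" must be read as applying to arbitrary subspaces, as it does for the Hausdorff property. If $P$ were only closed-hereditary, we would need $f(C)$ to be closed in $Y$, which is not given in general. So I will state explicitly that the argument uses full heredity, applied to the subspace $f(C)$.
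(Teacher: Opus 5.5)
Your proof is correct and is exactly the routine verification the paper intends; the paper calls the result immediate and gives no proof. The identity map witnesses part (1), and in part (2) heredity of $P$ applied to $f(C)\subseteq Y$, together with the homeomorphism $f|_C$, transfers $P$ to $C$; your caution that full (not merely closed) heredity is needed is apt, since $f(C)$ need not be closed in $Y$.
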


\begin{theorem}\label{pg2}
    Being a  $Q\text{--}P$ space is a topological property.
\end{theorem}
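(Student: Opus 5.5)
We must show that if $X$ is a $Q\text{--}P$ space and $h:X\to Z$ is a homeomorphism, then $Z$ is also a $Q\text{--}P$ space. The plan is to compose the witness-map with the inverse of $h$. Let $f:X\to Y$ be a witness-map, with $Y$ a witness space having property $P$. Define $g=f\circ h^{-1}:Z\to Y$. Then $g$ is a bijection, being a composition of bijections, and the target $Y$ still has property $P$. So it remains only to verify condition (\ref{2.1a}) for $g$.

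Let $C\subseteq Z$ be a subspace with property $Q$ and put $D=h^{-1}(C)\subseteq X$. The restriction $h^{-1}|_C:C\to D$ is a homeomorphism of subspaces, since restrictions of homeomorphisms to a subset are homeomorphisms onto the image with the subspace topologies. Because $Q$ is a topological property, $D$ has property $Q$. Hence, by (\ref{2.1a}) applied to $f$, the map $f|_D:D\to f(D)$ is a homeomorphism.

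We have $g(C)=f(D)$ and $g|_C=f|_D\circ h^{-1}|_C$. This is a composition of homeomorphisms, so $g|_C:C\to g(C)$ is a homeomorphism. Thus $g$ is a witness-map and $Y$ is a witness space for $Z$, so $Z$ is a $Q\text{--}P$ space.

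There is no real obstacle. The only point needing care is the implicit standing assumption that $Q$ (and $P$) are topological properties, i.e. invariant under homeomorphism. This invariance is exactly what lets $D$ inherit $Q$ from $C$, and I would state it explicitly in the proof.
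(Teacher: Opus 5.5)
Your proof is correct and follows the paper's approach: the paper also takes the witness map composed with the inverse of the homeomorphism ($\phi\circ f^{-1}$ in its notation) as the new witness map. You additionally spell out the verification that the paper leaves implicit, including the point that $Q$ must be invariant under homeomorphisms so that $h^{-1}(C)$ inherits $Q$.
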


\begin{proof}
Let $X$ be a  $Q\text{--}P$ space and  $f:X \to Y$ be a homeomorphism. As $X$ is  $Q\text{--}P$, there exists a space $Z$ satisfying the property $P$  and a witness map  $\phi : X\to Z$  of  $Q\text{--}P$ . Then $\phi \circ f^{-1}:Y \to Z$ is the required witness map showing that $Y$ is a  $Q\text{--}P$ space.
\end{proof}

We establish the following implications:

\begin{theorem}\label{pg3}
  If $P,Q, R$ be topological properties such that $P$ implies $Q$, then every $Q\text{--}R$ space is a $P\text{--R}$ space.  Hence, $Ps$-Hausdorff \imply $CC$-Hausdorff \imply $C$-Hausdorff.
\end{theorem}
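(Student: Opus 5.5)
The plan is to argue directly from Definition~\ref{pg1}, using the same witness space and witness map.

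Let $X$ be a $Q\text{--}R$ space. Fix a space $Y$ with property $R$ and a witness map $f:X\to Y$. Thus $f$ is a bijection and $f|_C:C\to f(C)$ is a homeomorphism for every subspace $C\subseteq X$ with property $Q$.

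\textbf{Main step.} Take any subspace $C\subseteq X$ with property $P$. Since $P$ implies $Q$, the subspace $C$ (with its subspace topology) has property $Q$. Hence $f|_C:C\to f(C)$ is a homeomorphism. So the same $Y$ and the same $f$ witness that $X$ is a $P\text{--}R$ space.

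The only care needed is that "$P$ implies $Q$" is applied to $C$ as a topological space in its own right. This is exactly how the properties are used in Definition~\ref{pg1}, so there is no real obstacle.

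\textbf{Hausdorff consequence.} Take $R$ to be Hausdorff. The classical implications needed are:
\begin{itemize}
\item every compact space is countably compact;
\item every countably compact space is pseudocompact.
\end{itemize}
The first is immediate from the definitions. For the second, if $g$ is an unbounded real-valued continuous function, choose points $x_n$ with $|g(x_n)|>n$. The set $\{x_n\}$ is then an infinite closed discrete set, so the space is not countably compact.

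Applying the general statement with $(P,Q)=(\text{compact},\text{countably compact})$ gives CC-Hausdorff $\Rightarrow$ C-Hausdorff. Applying it with $(P,Q)=(\text{countably compact},\text{pseudocompact})$ gives Ps-Hausdorff $\Rightarrow$ CC-Hausdorff.

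\textbf{Separation convention.} In the second implication, some authors build Tychonoff or Hausdorff into the definition of pseudocompactness, and this could be a subtle point. It is safest to take pseudocompact to mean only that every real-valued continuous function is bounded. With that convention the implication countably compact $\Rightarrow$ pseudocompact holds for arbitrary subspaces, with no separation axiom, which is what the argument requires.
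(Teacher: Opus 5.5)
Your proposal is correct and is essentially the paper's proof. In both, the same witness space and witness map serve for $P\text{--}R$ because every $P$-subspace is a $Q$-subspace, and the hierarchy then follows from compact $\Rightarrow$ countably compact $\Rightarrow$ pseudocompact.

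One small slip is in your side remark. You claim that $\{x_n\}$ is closed discrete, which needs $T_1$: you only get that each point $p$ has a neighbourhood $g^{-1}((g(p)-1,g(p)+1))$ meeting finitely many $x_n$. This contradicts your ``no separation axiom'' assertion. A separation-free argument is immediate, since $\{g^{-1}((-n,n)):n\in\mathbb{N}\}$ is a countable open cover whose finite subcover bounds $g$.
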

\begin{proof}
Let $X$ be a $Q\text{--}R$ space and $M$ be a subset of $X$ satisfying the property $P$. As $P$ implies $Q$, restriction of the witness map of $X$ to $M$ is also a homeomorphism. So the \wmap of $Q\text{--}R$ serves as a witness map for $P\text{--R}$. Hence $X$ is $P\text{--}R$ space. The rest follows from the fact: compact \imply countably compact \imply pseudocompact.  
\end{proof}

However, none of the converse implication holds true. We shall provide counterexamples for each in the final section. But in the next theorem, we give a sufficient condition for a $P\text{--}R$ space to be a $Q\text{--}R$ space. 

\begin{theorem}\label{pg4}
    Let $P,Q, R$ are topological properties. A $P\text{--}R$ space $X$ is a $Q\text{--}R$ space if any subspace of $X$ satisfying the property $Q$ is contained in a subspace having the property $P$. 
\end{theorem}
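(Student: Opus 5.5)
The plan is to use the witness map for the $P\text{--}R$ structure directly as the witness map for the $Q\text{--}R$ structure. Since $X$ is $P\text{--}R$, Definition \ref{pg1} gives a space $Y$ with property $R$ and a bijection $f:X\to Y$ such that $f|_M:M\to f(M)$ is a homeomorphism for every subspace $M\subseteq X$ with property $P$. We will show that the same $f$ and $Y$ also witness that $X$ is a $Q\text{--}R$ space.

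Let $C\subseteq X$ be a subspace with property $Q$. By hypothesis there is a subspace $M\subseteq X$ with $C\subseteq M$ and $M$ having property $P$. Then $g:=f|_M:(M,\tau_M)\to (f(M),\sigma_{f(M)})$ is a homeomorphism. The key observation is that the restriction of a homeomorphism to a subspace is a homeomorphism onto its image, where both sides carry the subspace topologies. We need two identifications of topologies:
\begin{itemize}
\item $(\tau_M)_C=\tau_C$, because subspace topologies are transitive;
\item $(\sigma_{f(M)})_{f(C)}=\sigma_{f(C)}$, again by transitivity, using $f(C)\subseteq f(M)$.
\end{itemize}
It follows that $f|_C=g|_C:(C,\tau_C)\to(f(C),\sigma_{f(C)})$ is a homeomorphism. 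Concretely, it is a bijection because $f$ is injective. It is continuous as the restriction of the continuous map $g$. Its inverse is $g^{-1}|_{f(C)}$, which is continuous for the same reason.

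Since $C$ was an arbitrary subspace with property $Q$, $f$ satisfies condition (\ref{2.1a}) with $Q$ in place of $P$, so $X$ is a $Q\text{--}R$ space with witness space $Y$. There is no real obstacle here. The only point requiring care is the transitivity of subspace topologies on both the domain and the codomain side; this is exactly the same reasoning already used in the proof of Theorem \ref{pg3}.
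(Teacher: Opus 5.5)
Your proposal is correct and matches the paper's proof: you take a $P$-subspace containing $C$, restrict the homeomorphism $f$ from that subspace down to $C$, and observe that this equals $f|_C$. Your explicit check that subspace topologies are transitive on both the domain and codomain side spells out what the paper leaves implicit. One small correction to your closing remark: the proof of Theorem~\ref{pg3} does not use this restriction step, because there the $P$-subspace is itself a $Q$-subspace and the witness map is a homeomorphism on it directly.
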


\begin{proof}
Let $X$ be a $P-R$ space and $f$ be the \wmap of $P\text{--}R$ space. Let $C$ be a subset of $X$ satisfying the property $Q$. By the given condition, there exists a subset $C^{'}$ of $X$ satisfying the property $P$ such that $C\subseteq C^{'}$. Since $X$ is $P\text{--}R$, we have
\begin{align} \label{eq2.1}
g = f|_{C^{'}}:C^{'} \to f(C^{'}), \text{ with respect  to  the subspace  topologies.}  
\end{align}
is a homeomorphism.
Since the restriction of a homeomorphism to every subset of $C^{'}$ is again a homeomorphism to its image, it follows that $g|_C : C \to g(C)$ is a homeomorphism. Now since $  g|_{C}=f|_{C} $,  $f|_{C}:C \rightarrow f(C) $   is a homeomorphism. Therefore  $X$ is a $Q\text{--}R$ space.
\end{proof}

In the following theorem, we exhibit sufficient condition for  $Q\text{--}P$ space to be productive.  Prior to that we introduce the following definition.

\begin{definition}\label{pg6}
    A topological property $P$ is called projection-invariant if the image under any canonical projection of any subspace satisfying property $P$ also satisfies the property $P$. It is called projection-dominated if the image under any canonical projection of any subspace satisfying property $P$ is contained in a subspace satisfying the property $P.$
\end{definition}

It is immediate that every projection-invariant property is projection-domi\-nated and and the converse holds whenever the property is hereditary. For instance, the property of being closed is projection-dominated but not projection-invariant.

\begin{theorem}\label{pg7}
    The property of being a  $Q\text{--}P$ space is productive (or $m$-productive) if $Q$ is projection-dominated and $P$ is productive (or $m$-productive), where $m$ is a cardinal number.  
\end{theorem}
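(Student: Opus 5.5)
Let $\{X_\alpha:\alpha\in\Lambda\}$ be a family of $Q\text{--}P$ spaces (with $|\Lambda|\le m$ in the $m$-productive case). For each $\alpha$ we fix a witness space $Y_\alpha$ with property $P$ and a witness map $f_\alpha:X_\alpha\to Y_\alpha$. Put $X=\prod_\alpha X_\alpha$, $Y=\prod_\alpha Y_\alpha$ and $f=\prod_\alpha f_\alpha:X\to Y$, i.e.\ $f((x_\alpha)_\alpha)=(f_\alpha(x_\alpha))_\alpha$. Since $P$ is productive (or $m$-productive), $Y$ has property $P$. Since each $f_\alpha$ is a bijection, $f$ is a bijection. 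It therefore remains to show that $f|_C:C\to f(C)$ is a homeomorphism for every $C\subseteq X$ with property $Q$.

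Fix such a $C$ and let $\pi_\alpha:X\to X_\alpha$ and $p_\alpha:Y\to Y_\alpha$ be the projections. Because $Q$ is projection-dominated, for each $\alpha$ there is $D_\alpha\subseteq X_\alpha$ with property $Q$ and $\pi_\alpha(C)\subseteq D_\alpha$. Since $f_\alpha$ is a witness map, $f_\alpha|_{D_\alpha}:D_\alpha\to f_\alpha(D_\alpha)$ is a homeomorphism. Restricting a homeomorphism to a subset (the argument used in Theorem \ref{pg4}) shows that $f_\alpha|_{\pi_\alpha(C)}$ is a homeomorphism onto $f_\alpha(\pi_\alpha(C))$.

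Next, put $D=\prod_\alpha D_\alpha\subseteq X$. Subspace topologies commute with products, so $D$ carries the product topology of the $D_\alpha$, and $f(D)=\prod_\alpha f_\alpha(D_\alpha)$ carries the product topology of the subspaces $f_\alpha(D_\alpha)\subseteq Y_\alpha$. Hence $f|_D=\prod_\alpha f_\alpha|_{D_\alpha}$ is a product of homeomorphisms and is itself a homeomorphism $D\to f(D)$. To see this we check continuity of $f|_D$ and of its inverse coordinatewise: $p_\alpha\circ f|_D=f_\alpha|_{D_\alpha}\circ\pi_\alpha|_D$ is continuous for each $\alpha$, and symmetrically for the inverse.

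Finally, $C\subseteq\prod_\alpha\pi_\alpha(C)\subseteq D$, so $f|_C$ is the restriction of the homeomorphism $f|_D$ to $C$. It is therefore a homeomorphism onto $f(C)$, exactly as in the proof of Theorem \ref{pg4}. This shows that $f$ is a witness map and $X$ is a $Q\text{--}P$ space.

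The main obstacle is the middle step: we cannot apply the witness property to $C$ directly, only to subsets of individual factors. The key point is that $C$ sits inside a product $D$ of $Q$-subspaces, and the product of the restricted witness maps is a homeomorphism. This is where projection-domination is used; we need only containment in a $Q$-subspace, not that $\pi_\alpha(C)$ itself has $Q$. Everything else is standard bookkeeping about product and subspace topologies.
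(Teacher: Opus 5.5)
Your proof is correct and follows the paper's argument essentially step for step: you take the product map of the witness maps, use projection-domination to place $C$ inside $\prod_\alpha D_\alpha$ with each $D_\alpha$ having $Q$, note that the product of the homeomorphisms $f_\alpha|_{D_\alpha}$ is a homeomorphism (the paper cites a Munkres exercise here, whereas you check it coordinatewise), and then restrict to $C$. Your intermediate remark that $f_\alpha|_{\pi_\alpha(C)}$ is a homeomorphism is never used and can be dropped.
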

\begin{proof}
    Let $\{X_\alpha\}$ be a family of  $Q\text{--}P$ spaces and for each $\alpha$, let $f_\alpha: X_\alpha \to Y_\alpha$ be the witness-map and $Y_\alpha$ be the witness of $X_\alpha$ satisfying property $P$. To avoid notational complexity, we use the $\alpha^{th}$ projection map from any product space to its $\alpha^{th}$ component by $\pi_\alpha$. We therefore have the following commutative diagram. 

\begin{equation}\label{2.3a}
\begin{tikzcd}
\prod_{\alpha} X_{\alpha}
\arrow[r,"h"]
\arrow[d,"\pi_{\alpha}"']
&
\prod_{\alpha} Y_{\alpha}
\arrow[d,"\pi_{\alpha}"]
\\
X_{\alpha}
\arrow[r,"f_{\alpha}"']
&
Y_{\alpha}
\end{tikzcd}
\qquad
\begin{aligned}
\text{where } \quad
h\bigl((x_{\alpha})\bigr)
&=
\bigl(f_{\alpha}(x_{\alpha})\bigr),\\[0.5ex]
\text{i.e.,}\quad
\pi_{\alpha}\circ h
&=
f_{\alpha}\circ \pi_{\alpha}
\quad \text{for all } \alpha
\end{aligned}
\end{equation}

Let $C$ be a subspace of $\prod_\alpha X_\alpha$ satisfying the property $Q$. As $Q$ is projection-dominated, $\pi_\alpha C$ is contained in some $C_\alpha$ which satisfies the property $Q$,  Then $f_\alpha: C_\alpha \to f_\alpha(C_\alpha)$, is a homeomorphism. Clearly, $C$ is a subspace of $\prod_\alpha C_\alpha$ Then by the above equation (\ref{2.3a}), $h(C)$ is also a subspace of $\prod_\alpha f_\alpha (C_\alpha)$. By Exercise 8C of \cite{munkers}, $h^* = h|_{\prod_\alpha C_\alpha} : \prod_{\alpha} C_\alpha \to \prod_\alpha f_\alpha (C_\alpha)$ is a homeomorphism. Then $h|_C$ which is the same as $h^*|_C$, is also a homeomorphism between $C$ and $h(C)$. As $P$ is productive, $\prod_\alpha Y_\alpha$ satisfies the property $P$. Then $h$ becomes a witness-map for the  $Q\text{--}P$ space and $\prod_\alpha Y_\alpha$, a witness of $\prod_\alpha X_\alpha$. If $P$ is $m$-productive, the same proof follows replacing arbitrary product by $m$-many product.  
\end{proof}

Next theorem tells a sufficient condition for  $Q\text{--}P$ space to be $T_1$. To this end, we introduce the following terminology.  We call a topological property a finite-type property (or of finite type) if, any finite topological space satisfies the property. For example, compact, linedel$\ddot{o}$f, pseudocompact, countbly-compact etc. are topological properties of finite-type. But all the above properties are of compact-type, that is every compact space satisfies them. Infact any compact-type property is of finite-type. However, there are finite-type properties which are not of compact-type, e.g, second countable, first-countable, Noetherian, separable, Alexandrov etc.

\begin{theorem} \label{pg8}
  If $Q$ is a finite-type topological property and $P$ satisfies $T_1$, then every  $Q\text{--}P$ space is $T_1$. 
\end{theorem}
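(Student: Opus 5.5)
The plan is to show that every singleton in $X$ is closed, or equivalently, that for distinct points $x,y\in X$ there is an open set containing $x$ and missing $y$. Let $f:X\to Y$ be a witness-map with $Y$ a $T_1$ space. Fix distinct $x,y\in X$ and consider the two-point subspace $C=\{x,y\}$ with the subspace topology $\tau_C$. Since $C$ is finite and $Q$ is of finite type, $C$ satisfies $Q$. By (\ref{2.1a}), $f|_C:C\to f(C)=\{f(x),f(y)\}$ is a homeomorphism. Here $f(x)\neq f(y)$ because $f$ is a bijection.

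Next I use the fact that $Y$ is $T_1$, and that $T_1$ is hereditary. The two-point subspace $f(C)$ of $Y$ is therefore $T_1$, hence discrete. Because $f|_C$ is a homeomorphism, $C$ is also discrete in the topology it inherits from $X$. So $\{x\}$ is open in $C$, which means there is an open $U\subseteq X$ with $U\cap\{x,y\}=\{x\}$. Thus $U$ contains $x$ and misses $y$. Since $x,y$ were arbitrary distinct points, $X$ is $T_1$.

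I expect no step to be a real obstacle. The only point needing care is the reading of ``finite-type''. The argument applies $Q$ to a finite subspace of $X$, namely the two-point space $\{x,y\}$ carrying whatever topology it inherits, which need not be discrete. So I rely on the definition literally: every finite topological space, with any topology, has $Q$. Under that reading the proof goes through directly. A trivial degenerate case is when $X$ has at most one point, where $T_1$ holds vacuously.
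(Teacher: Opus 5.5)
Your proof is correct and follows the same route as the paper. It restricts the witness map to the two-point subspace $\{x,y\}$, which has $Q$ because $Q$ is of finite type. It then pulls back the discreteness of the $T_1$ subspace $\{f(x),f(y)\}\subseteq Y$ to obtain an open set containing $x$ and missing $y$.
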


\begin{proof}
Let $X$ be a  $Q\text{--}P$-space. Let $K=\{x,y\}$. Then $K$ satisfies the property $Q$. Let $f: X\rightarrow Y $ be the witness map of  $Q\text{--}P$ space. Then $f|_{K}: K \rightarrow f(K)$ is a homeomorphism. Note that $f(K) = \{f(x),f(y)\}$. As $Y$ is $T_1$, $f(K)$ is discrete. This implies $\{x\},\{y\}$ are open in $\{x,y\}$. So, there exist open sets $U$ and $V$ in $X$ such that $U \cap \{x,y\} =\{x\}$ and $V \cap \{x,y\} =\{y\}$. Thus $x\in U$ but $y \notin U$ and  $y\in V$ but $x \notin V$. Therefore, $(X,\tau)$ is $T_1$.
\end{proof}

\begin{theorem}\label{pg9}
    If $X$ is $Q$ but not $P$, then $X$ is not  $Q\text{--}P$. 
\end{theorem}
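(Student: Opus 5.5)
The plan is to argue by contraposition: we show that if $X$ is a $Q\text{--}P$ space and $X$ itself has property $Q$, then $X$ has property $P$. This is exactly the statement, since a space that is $Q$ but not $P$ would then be forced to have $P$.

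Assume $X$ is $Q\text{--}P$. Fix a witness space $Y$ with property $P$ and a witness map $f:X\to Y$ as in Definition \ref{pg1}. Suppose $X$ has property $Q$.

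\begin{enumerate}
\item Take the subset $C=X$ in condition (\ref{2.1a}). Since $X$ satisfies $Q$, the restriction $f|_X=f:(X,\tau)\to (f(X),\sigma_{f(X)})$ is a homeomorphism.
\item Since $f$ is a bijection, $f(X)=Y$. Hence $\sigma_{f(X)}=\sigma$, and $f:X\to Y$ is a homeomorphism onto the whole witness space.
\item Property $P$ is a topological property, so it is preserved under homeomorphisms. Since $Y$ has $P$, so does $X$.
\end{enumerate}

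This contradicts the hypothesis that $X$ fails $P$. Therefore $X$ cannot be $Q\text{--}P$.

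There is no real obstacle. The only point needing care is step 2: the subspace topology on $f(X)$ is the full topology $\sigma$ on $Y$ because $f$ is surjective. This is where bijectivity of the witness map is used, and it is what turns the local homeomorphism condition into a global one. We also use, as elsewhere in the paper, that "topological property" means invariance under homeomorphism.

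As a remark to place after the proof, the same argument shows the following. For a space $X$ with property $Q$, being $Q\text{--}P$ is equivalent to having $P$. The converse direction is Theorem \ref{pg1.1}(1): the identity map serves as a witness map.
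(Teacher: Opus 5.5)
Your proposal is correct and follows essentially the same argument as the paper: apply the witness condition to $C=X$, so the witness map is a homeomorphism onto the witness space $Y$, and hence $X$ inherits $P$. You also spell out explicitly that surjectivity gives $f(X)=Y$, a step the paper leaves implicit.
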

\begin{proof}
    If $X$ is $Q$ and also  $Q\text{--}P$, then the witness map is a homeomorphism. So $X$ is homeomorphic with its witness which is $P$. Therefore $X$ satisfies the property $P$ which is a contradiction.
\end{proof}

\begin{theorem}\label{pg10}
    Let $Q$ be a compact-type topological property and $X$ be a Fr\'{e}chet  $Q\text{--}P$ space, then the witness map of $X$ is continuous for any property $P$.
\end{theorem}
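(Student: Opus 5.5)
Since $X$ is Fr\'echet, continuity of the witness map $f:X\to Y$ can be tested on convergent sequences. The plan is to show that $f$ preserves closure: $f(\overline{A})\subseteq \overline{f(A)}$ for every $A\subseteq X$. It is worth noting that we only use the Fr\'echet property of the domain. We do not need sequential conditions on $Y$, because we prove closure preservation directly, and then $f^{-1}$ of a closed set is closed.

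Fix $A\subseteq X$ and $x\in\overline{A}$. Because $X$ is Fr\'echet, there is a sequence $(x_n)$ in $A$ with $x_n\to x$. Put $K=\{x\}\cup\{x_n:n\in\mathbb{N}\}$. The key observation is that $K$ is compact: any open cover contains a member containing $x$, and that member contains all but finitely many $x_n$. Hausdorffness is not needed for this. Since $Q$ is of compact type, $K$ satisfies $Q$. By the definition of a witness map, $f|_K:K\to f(K)$ is then a homeomorphism onto $f(K)$ with its subspace topology from $Y$.

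Within $K$, the sequence $x_n$ converges to $x$. Applying the homeomorphism $f|_K$ gives $f(x_n)\to f(x)$ in the subspace $f(K)$. Convergence in a subspace implies convergence in $Y$: every open $W\ni f(x)$ in $Y$ meets $f(K)$ in an open neighbourhood of $f(x)$, which eventually contains $f(x_n)$. Since each $f(x_n)\in f(A)$, we get $f(x)\in\overline{f(A)}$. Hence $f(\overline A)\subseteq\overline{f(A)}$, so $f$ is continuous, regardless of $P$.

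The only delicate point is the compactness of $\{x\}\cup\{x_n\}$ in a possibly non-Hausdorff space, together with the passage from convergence in the subspace $f(K)$ to convergence in $Y$. Both are routine once written out, so I expect no real obstacle. The essential hypotheses are that the Fr\'echet property lets closures be detected by sequences, and that the compact-type condition ensures $K$ has property $Q$.
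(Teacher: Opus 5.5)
Your proof is correct and follows the same route as the paper. You use the Fr\'echet property to get a sequence $x_n\to x$ in $A$, observe that $\{x\}\cup\{x_n:n\in\mathbb{N}\}$ is compact and hence has property $Q$, transport convergence through the homeomorphism $f|_K$, and conclude $f(\overline{A})\subseteq\overline{f(A)}$; you also justify two routine steps the paper leaves implicit, namely compactness of $K$ without Hausdorffness and the passage from convergence in $f(K)$ to convergence in $Y$.
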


\begin{proof}
    Recall that a space is Fr\'{e}chet if whenever a point lies in the closure of a subset, there exists a sequence from the subset which converges to that point. Let $A$ be subset of $X$ and $p \in cl_X A$. Then there exists a sequence $\{x_n\}$ in $A$ converging to the point $p$. Then $C=\{x_n: n\in \mathbb{N}\}\cup \{p\}$ is a compact subset of $X$. As $Q$ is compact-type topological property, $C$ satisfies the property $Q$. Then $f|_{C}: C \to f(C)= \{f(x_n): n\in \mathbb{N}\}\cup \{f(p)\}$ is a homeomorphism. Therefore $f(x_n\}$ converges to $f(p)$. As $f(x_n)\in f(A)$, $f(p) \in cl_Y f(A)$. So $f(cl_X A)\subseteq cl_Y f(A)$. So $f$ is continuous. 
\end{proof}

\begin{corollary}\label{pg11}
     Let $Q$ be a compact-type topological property and let property $P$ be preserved under inverses of continuous injections, henceforth to be referred as inverse invariant under continuous injection.Then every Fr\'{e}chet  $Q\text{--}P$ space has property $P$.
\end{corollary}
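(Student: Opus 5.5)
The plan is to obtain the corollary directly from Theorem \ref{pg10}. Let $X$ be a Fr\'{e}chet $Q\text{--}P$ space. Fix a witness space $Y$ with property $P$ and a witness map $f:X\to Y$ as in Definition \ref{pg1}. By definition $f$ is a bijection, hence in particular an injection.

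The first step is to show that $f$ is continuous. This is exactly what Theorem \ref{pg10} gives, since $Q$ is of compact-type and $X$ is Fr\'{e}chet. The underlying reason is that a convergent sequence together with its limit, $\{x_n:n\in\mathbb{N}\}\cup\{p\}$, is compact. Such a set therefore has property $Q$, so $f$ restricted to it is a homeomorphism. Hence $f$ preserves convergence of sequences, and the Fr\'{e}chet property converts this into $f(cl_X A)\subseteq cl_Y f(A)$ for every $A\subseteq X$. So $f:X\to Y$ is a continuous injection from $X$ onto a space having property $P$.

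The second step invokes the hypothesis on $P$. We read ``inverse invariant under continuous injection'' as follows: whenever $g:Z\to W$ is a continuous injection and $W$ has $P$, then $Z$ has $P$. Applying this with $Z=X$, $W=Y$ and $g=f$ shows that $X$ has property $P$, which completes the argument.

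There is no real obstacle here. The only point to check is that the hypothesis on $P$ is applied in the right direction: the domain inherits $P$ from the codomain. This is the correct direction for properties such as $T_1$ and $T_2$. For example, for Hausdorff $Y$, disjoint neighbourhoods of $f(x)$ and $f(y)$ pull back under the continuous injection $f$ to disjoint neighbourhoods of $x$ and $y$. It is also worth remarking that surjectivity of $f$ is not needed; only continuity and injectivity are used.
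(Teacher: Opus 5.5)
Your proposal is correct and matches the paper's proof: Theorem \ref{pg10} makes the witness map $f$ continuous, and the inverse invariance of $P$ under continuous injections then transfers $P$ from $Y$ to $X$. Your reading of the direction of that hypothesis (the domain inherits $P$ from the codomain) is the one the paper uses.
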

\begin{proof} 
By above theorem the witness map $f$ is continuous. As the property $P$ is inverse invariant under continuous injection, $X$ satisfies the property $P$. In particular, every first countable  $Q\text{--}P$ space is $P$. 
\end{proof}

In topology, a topological property is called an absolute property if for all nested subspaces $Z \subseteq Y \subseteq X$, $Z$ satisfies $P$ as a subspace of $Y$ if and only if $Z$ satisfies $\mathcal{P}$ as a subspace of $X$. For example, compactness, countably compactness, connectedness, pseudocompactness and Lindel$\ddot{o}$f property are absolute properties. However, denseness, openness etc., are not absolute properties.

\begin{theorem}\label{pg12}
    Let $P$ be a hereditary property and $Q$ an absolute property, then the property of being  $Q\text{--}P$ spaces is hereditary. 
\end{theorem}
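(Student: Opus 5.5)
The plan is to restrict the witness map. Let $X$ be a $Q\text{--}P$ space, with witness map $f:X\to Y$ onto a witness $Y$ having property $P$. Fix an arbitrary subspace $A\subseteq X$. I will take $g=f|_A:A\to f(A)$ as the candidate witness map, and $f(A)$, carrying the subspace topology $\sigma_{f(A)}$ from $Y$, as the candidate witness. Since $f$ is a bijection, $g$ is a bijection of $A$ onto $f(A)$. Since $P$ is hereditary, $f(A)$ has property $P$. So the only thing left is the homeomorphism condition (\ref{2.1a}) for $g$.

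Next, let $C\subseteq A$ satisfy $Q$ as a subspace of $A$. This is the one place the hypothesis on $Q$ enters. Because $Q$ is absolute, applied to the nested subspaces $C\subseteq A\subseteq X$, the set $C$ also satisfies $Q$ as a subspace of $X$. Because $f$ is a witness map for $X$, the map $f|_C:(C,\tau_C)\to (f(C),\sigma_{f(C)})$ is a homeomorphism.

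Finally, I need to check that this is the same map, with the same topologies, that $g$ induces on $C$. On the domain side, the topology $C$ inherits from $(A,\tau_A)$ equals $\tau_C$, since subspace topologies are transitive. On the codomain side, $g(C)=f(C)$, and the topology $f(C)$ inherits from $f(A)$ equals $\sigma_{f(C)}$, again by transitivity. Also $g|_C=f|_C$. Hence $g|_C$ is a homeomorphism, so $g$ is a witness map and $A$ is a $Q\text{--}P$ space.

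I expect no real obstacle. The only point needing care is the direction in which absoluteness is used: $Q$ holding relative to $A$ must imply $Q$ holding relative to $X$. That is exactly what the definition of an absolute property supplies, together with the transitivity of subspace topologies on both sides.
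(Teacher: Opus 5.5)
Your proof is correct and follows the paper's argument essentially verbatim. You restrict the witness map $f$ to the subspace, use heredity of $P$ to make $f(A)$ the witness, and use absoluteness of $Q$ (in the direction ``$Q$ in $A$ implies $Q$ in $X$'') to conclude that $g|_C = f|_C$ is a homeomorphism, and your remark on transitivity of subspace topologies makes explicit a detail the paper leaves implicit.
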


\begin{proof}
Let  $(X,\tau)$ be  a  $Q\text{--}P$ space and $Y$ be a subspace of $X$. Now there exists a space  $Z$ satisfying the property $P$  and a witness map $f:X \rightarrow Z$ of  $Q\text{--}P$ space. Let  $ g=f|_{Y} $. Then  $g:Y \rightarrow f(Y)$
is a bijection. Since $Z$ has property $P$ and $P$ is hereditary, $f(Y)$ also has property $P$. Now let $K$ be a subset of $Y$ satisfying the property $Q$. Then $K$ has property $Q$ in $X$. So $f|_{K} : K \rightarrow f(K)$ is a homeomorphism. But $f|_{K}= g|_{K}$. Therefore $g|_{K} : K \rightarrow f(K)\subseteq f(Y)$ is also a homeomorphism. Also note that $g(K)= f(K)$. Thus $g$ is the required witness map of  $Q\text{--}P$ from $Y$ onto $f(Y)$. So $Y$ is  $Q\text{--}P$ space. Hence the  $Q\text{--}P$ property is hereditary. 
\end{proof}

 Let $X = \bigoplus_{\alpha \in \Lambda} X_\alpha $. Let $P$ be a topological property and  $\mathcal{E}(P)$ the family of all subspaces of $X$ that satisfy the property $P$. For all $T \in \mathcal{E}(P)$, let $\mathcal{L}_T = \{\alpha \in \lambda: T\cap X_\alpha \neq \emptyset\}$, $L_T = \mbox{cardinality of } \mathcal{L}_T $ and $l(P)$ be the least cardinal number strictly greater than $L_T$ for all $ T \in \mathcal{E}(P)\}$. We call $l(P)$ the length of the property $P$ in the direct sum. By this terminology, length of compactness is $\omega_0$ and that of lindel$\ddot{o}$fness $\omega_1$. 
 
A property $P$ is said to be invariant under direct sums of length $l(P)$ if it is preserved under direct sums of cardinality $\delta \lneqq l(P)$.

 Let $\{f_\alpha : X_\alpha \to Y_\alpha: \alpha \in \lambda\}$ be a family of functions. Define $\bigoplus\limits_{\alpha \in \lambda} f_\alpha : \bigoplus\limits_{\alpha \in \lambda} X_\alpha \to \bigoplus\limits_{\alpha \in \lambda} Y_\alpha$ by $\bigoplus\limits_{\alpha \in \lambda} f_\alpha (x) = f_\beta (x)$, if $x \in X_\beta$. We call $\bigoplus_{\alpha \in \Lambda} f_\alpha$ the pasting of the maps $\{f_\alpha\}_{\alpha \in \Lambda}$.

\begin{theorem}\label{pg13}
    Let $Q$ be a clopen-hereditary absolute property and $P$ a hereditary property and invariant under direct sums of cardinality strictly less than $l(Q)$. Then the  $Q\text{--}P$ property is additive. 
\end{theorem}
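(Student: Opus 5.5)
The plan is to build a witness for the direct sum by pasting witnesses of the summands. Let $X=\bigoplus_{\alpha\in\Lambda}X_\alpha$, where each $X_\alpha$ is a $Q\text{--}P$ space. For each $\alpha$ fix a witness $Y_\alpha$ with property $P$ and a witness map $f_\alpha:X_\alpha\to Y_\alpha$. Put $Y=\bigoplus_{\alpha}Y_\alpha$ and let $f=\bigoplus_\alpha f_\alpha:X\to Y$ be the pasting. Since each $f_\alpha$ is a bijection onto $Y_\alpha$ and the summands are disjoint, $f$ is a bijection.

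\textbf{Step 1: $f$ is a homeomorphism on every $Q$-subspace.} Let $C\subseteq X$ have property $Q$.
\begin{enumerate}
\item By the definition of the length $l(Q)$, the set $\mathcal{L}_C=\{\alpha: C\cap X_\alpha\neq\emptyset\}$ has cardinality strictly less than $l(Q)$.
\item For each $\alpha\in\mathcal{L}_C$, the set $C_\alpha=C\cap X_\alpha$ is clopen in $C$, because $X_\alpha$ is clopen in $X$. Since $Q$ is clopen-hereditary, $C_\alpha$ has $Q$ as a subspace of $C$.
\item Since $Q$ is absolute, $C_\alpha$ also has $Q$ as a subspace of $X$ and of $X_\alpha$. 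Hence $f_\alpha|_{C_\alpha}:C_\alpha\to f_\alpha(C_\alpha)$ is a homeomorphism.
\item Topologically, $C=\bigoplus_{\alpha\in\mathcal{L}_C}C_\alpha$, because its pieces are relatively clopen and partition it. Likewise $f(C)=\bigoplus_{\alpha\in\mathcal{L}_C}f_\alpha(C_\alpha)$ inside $Y$.
\item A pasting of homeomorphisms between corresponding summands is a homeomorphism of the sums. Since $f|_C$ agrees with $f_\alpha$ on each $C_\alpha$, $f|_C:C\to f(C)$ is a homeomorphism.
\end{enumerate}
This is the routine part of the argument.

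\textbf{Step 2: $Y$ has property $P$.} This is where I expect the main obstacle. The hypothesis only gives invariance of $P$ under direct sums of fewer than $l(Q)$ summands, while $\Lambda$ is arbitrary.
\begin{enumerate}
\item If $|\Lambda|<l(Q)$, the hypothesis gives $P$ for $Y$ directly.
\item For large $\Lambda$, I would first try to show that $P$ for $\bigoplus_\alpha Y_\alpha$ is controlled by its subsums $\bigoplus_{\alpha\in S}Y_\alpha$ with $|S|<l(Q)$. Each such subsum has $P$ by the sum hypothesis, and each is a clopen subspace of $Y$, so hereditariness of $P$ is consistent with this. The aim is to use exactly the hypothesis that $P$ is invariant under sums "of length $l(Q)$", read as the condition that makes $P$ hold for the whole sum once it holds on all sums of fewer than $l(Q)$ pieces. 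This is the step I would scrutinize most carefully.
\item If that fails, I would try to modify the witness. Keep the underlying set $\bigoplus_\alpha Y_\alpha$. Use any topology with property $P$ that agrees with the sum topology on each subsum $\bigoplus_{\alpha\in S}Y_\alpha$ with $|S|<l(Q)$. Step 1 only uses the topology on such subsums, because every $Q$-subspace lies in one of them. Hereditariness of $P$ is what should let the restriction to these subsums be checked piece by piece.
\end{enumerate}

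Once Step 2 is secured, $f$ is a witness map and $Y$ a witness for $X$, so $X$ is $Q\text{--}P$. Hence the $Q\text{--}P$ property is additive.
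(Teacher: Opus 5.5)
Your Step~1 is correct and is the same argument the paper gives: clopen-heredity and absoluteness of $Q$ make each $C\cap X_\alpha$ a $Q$-subspace of $X_\alpha$, and the pasted map is then a homeomorphism on $C$. The gap is Step~2, and it cannot be closed, because under the stated hypotheses the conclusion is false.

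Take $Q$ = compactness, which is clopen-hereditary, absolute, and has $l(Q)=\omega_0$. Take $P$ = second countability, which is hereditary and preserved by finite direct sums. Let $X=\bigoplus_{\alpha<\kappa}\{p_\alpha\}$ with $\kappa>2^{\aleph_0}$. Each summand has $P$, hence is $Q\text{--}P$ by Theorem~\ref{pg1.1}. Suppose $f:X\to Y$ were a witness map. For $x\neq y$ the compact discrete set $\{x,y\}$ would map homeomorphically onto $\{f(x),f(y)\}$, so $Y$ would be $T_1$. But a $T_0$ second countable space with base $\{B_n\}$ has at most $2^{\aleph_0}$ points, since $y\mapsto\{n: y\in B_n\}$ is injective. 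This contradicts $|Y|=\kappa$, so $X$ has no witness at all.

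In particular, your reading (2) fails: every finite subsum of $\bigoplus Y_\alpha$ is second countable, but the whole sum is not. The topology you ask for in (3) does not exist either, since agreeing with the finite discrete subsums forces it to be $T_1$. With $P$ = ``countable'' and $\kappa=\omega_1$ the failure is even more immediate, because there is no bijection onto a countable set.

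The paper's own proof does not get around this. It only checks that $h(C)=\bigoplus_{\alpha\in\mathcal{L}_C}f_\alpha(C\cap X_\alpha)$ has $P$. The definition of a $Q\text{--}P$ space instead demands a single witness space with $P$ and a single bijection defined on all of $X$, so the sum hypothesis on $P$ is applied to the wrong object.

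What is missing is a stronger hypothesis on $P$. If $P$ is preserved by arbitrary direct sums, as $T_1$, Hausdorff, regular, Tychonoff, normal and metrizable are, then $Y=\bigoplus_{\alpha\in\Lambda}Y_\alpha$ has $P$. Your Step~1 then finishes the proof, with no need for heredity of $P$ or for the bound $|\mathcal{L}_C|<l(Q)$. This corrected version still yields the additivity of $C$-, $CC$- and $Ps$-Hausdorff stated later in the paper.
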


\begin{proof}
Let $ X = \bigoplus_{\alpha \in \Lambda} X_\alpha $, where each $ X_\alpha $ is a  $Q\text{--}P$-space. Let for each $\alpha$, $Y_\alpha$ be witness of $X_\alpha$ and $f_\alpha$ the witness map. 

Let $C$ be a subset of $X$ satisfying the property $Q$.
 Now $ C \cap X_\alpha $ is $Q$ in $C$ as $ C \cap X_\alpha $ is clopen subset of $C$,$Q$ clopen hereditary and hence $Q$ in $X_\alpha$ as $Q$ is absolute property. Then $f_{\alpha}|_{C\cap X_{\alpha}} :C\cap X_{\alpha}\rightarrow f_{\alpha}(C\cap X_{\alpha})$ is a homeomorphism. Let  $ h = \bigoplus\limits_{\alpha \in \mathcal{L}_C} f_\alpha $ be the pasting of $f_{\alpha}$'s. Then clearly $h$ is bijective and $h|_{C} = \bigoplus\limits_{\alpha \in \mathcal{L}_C}  f_{\alpha}|_{(C \cap X_{\alpha})} :C\to h(C) = \bigoplus\limits_{\alpha \in \mathcal{L}_C} f_\alpha (C\cap X_\alpha)$ is a homeomorphism. Each $f_\alpha (C\cap X_\alpha)$ is $P$ as $P$ is hereditary. As $P$ is invariant under $l(Q)$-many direct sum, $\bigoplus\limits_{\alpha \in \mathcal{L}_C} f_{\alpha}{(C \cap X_{\alpha})}$ is also $P$.   So, $X$ is  $Q\text{--}P$.
 
 The converse is trivial since $P$ and $Q$ are hereditary and absolute properties, respectively.  
\end{proof}

\begin{theorem}\label{pg13.1}
    Let $X$ be a space which satisfies the following property.
$$
    \text{For any subset } A \text{ of the space} \ X, A \text{ satisfies property } Q \text{ if and only if } A \text{ is finite}.
$$

Let $P$ be a property such that every discrete space satisfies $P$.
    Then $X$ is  $Q\text{--}P$ space. 
    
\end{theorem}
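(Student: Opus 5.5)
The plan is to take the witness space to be the set $X$ itself with the discrete topology, and the witness map to be the identity. Let $Y=(X,\mathcal{D})$, where $\mathcal{D}$ is the discrete topology on the underlying set of $X$, and let $f=\mathrm{id}:X\to Y$. By hypothesis every discrete space satisfies $P$, so $Y$ is a space with property $P$. The map $f$ is trivially a bijection. It therefore remains only to verify condition (\ref{2.1a}) of Definition \ref{pg1}.

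Let $C\subseteq X$ satisfy $Q$. By the assumed characterization of $Q$-subsets of $X$, $C$ is finite. Its image $f(C)=C$ carries, as a subspace of $Y$, the discrete topology. Since $f|_C$ is the identity on the underlying set, it is a homeomorphism $C\to f(C)$ exactly when the subspace topology $\tau_C$ on the finite set $C$ is also discrete. So the whole proof reduces to one claim: \emph{every finite subspace of $X$ is discrete}.

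I expect this reduction step to be the main obstacle. A finite subspace is discrete precisely when it is $T_1$, so I would show that $X$ is $T_1$ and then conclude. For each $x\in C$, the set $C\setminus\{x\}$ is a finite union of closed points and hence closed in $C$. Thus $\{x\}$ is open in $C$, so $\tau_C$ is discrete, $f|_C$ is a homeomorphism, and $X$ is a $Q\text{--}P$ space with witness $Y$ and witness map $\mathrm{id}$.

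The $T_1$ hypothesis is genuinely needed and does not follow from the stated conditions alone. For example, a two-point indiscrete space with $Q=$ ``finite'' satisfies the displayed condition. Yet by Theorem \ref{pg9} it is not $Q\text{--}P$ for $P=$ Hausdorff, since it satisfies $Q$ but not $P$. So in the proof I would make explicit that $X$ is assumed $T_1$, in line with the paper's setting of spaces lying between $T_1$ and $T_2$. Under that assumption the argument above is complete.
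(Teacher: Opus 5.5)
Your proof is the paper's own (witness: the underlying set of $X$ with the discrete topology; witness map: the identity), except that you actually check the step the paper dismisses with ``the rest follows immediately''. Your caveat is also right: that step needs every finite subspace of $X$ to be discrete, i.e.\ $X$ to be $T_1$. Your two-point indiscrete space shows the statement is false without it, and it works equally with $Q=$ compact and $P=$ Hausdorff via Theorem~\ref{pg9}. So $T_1$ should be added as a hypothesis, at no cost, since every place the paper invokes this result (Theorem~\ref{pg27.1}, Examples~\ref{chncch}, \ref{cchnpsh}, \ref{pshnh}) concerns a $T_1$ space.
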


\begin{proof}
    Take the witness of $X$, $X$ itself with discrete topology and the witness map as the Identity map. The rest follows immediately.
\end{proof}

Recall that for a space $X$, Alexandroff duplicate $A(X)$ of $X$, is $X \cup X'$ where $X'= (X \times \{0\})$ with neighbourhood system defined as follows.

For each $x\in X$, we denote $x'=(x,0)$. Then the basic neighbourhood of $x'$ is $\{x'\}$ and that of $x$ is $U\cup (U'-\{x'\})$ where $U'=\{x': x\in U\}$ and $U$ runs through basic neighbourhood system of $x$ in $X$. 

Define, $\delta_X : A(X) \to X$ by $\delta_X (a) = x$ if and only if $a = x$ or $x'$. 

Then it can be easily verified that $\delta_X$ is continuous surjection with $\delta_X^{-1}(x)$ is a two-pointric set, namely $\{x, x'\}$ for each $x\in X$.

Then we have the following theorem.

\begin{theorem}\label{pg13.1a}
    Let $P$ be a property that is inverse-invariant under $\delta_X$ and $Q$ is closed hereditary. Then the Alexandroff duplicate of a  $Q\text{--}P$ space is  $Q\text{--}P$. 
\end{theorem}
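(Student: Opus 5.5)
The plan is to build the witness for $A(X)$ directly from a witness for $X$ by \emph{duplicating} it. Let $f:X\to Y$ be a witness map with $Y$ satisfying $P$. Define $A(f):A(X)\to A(Y)$ by $A(f)(x)=f(x)$ and $A(f)(x')=f(x)'$. Since $f$ is a bijection, so is $A(f)$, and it satisfies $\delta_Y\circ A(f)=f\circ\delta_X$. The candidate witness space is $A(Y)$. Because $\delta_Y:A(Y)\to Y$ is the map under which $P$ is assumed inverse-invariant and $Y$ has $P$, the space $A(Y)$ has $P$. What remains is to show that $A(f)|_C$ is a homeomorphism onto $A(f)(C)$ for every $C\subseteq A(X)$ satisfying $Q$.

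Fix such a $C$. First, $X$ (the unprimed copy) is closed in $A(X)$, because its complement $X'$ consists of isolated points. Hence $C\cap X$ is closed in $C$, so it has $Q$ by closed-heredity. The subspace topology that $X$ inherits from $A(X)$ is its original topology, since the traces of $U\cup(U'\setminus\{x'\})$ on $X$ are exactly the sets $U$. So $C\cap X$ is a $Q$-subset of $X$, and $f|_{C\cap X}$ is a homeomorphism onto $f(C\cap X)$. Next, every point of $C\cap X'$ is isolated in $C$, and its image $f(x)'$ is isolated in $A(f)(C)$. Therefore continuity of $A(f)|_C$ and of its inverse needs checking only at points $x\in C\cap X$. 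At such a point the basic neighbourhoods in $C$ are $(U\cup U'\setminus\{x'\})\cap C$, and their images have the same shape, with $f(U)$ in place of $U$, intersected with $A(f)(C)$.

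At an unprimed point $x$ I will match neighbourhoods in two parts. The unprimed part is controlled by the homeomorphism $f|_{C\cap X}$: given $U$, choose $V$ open in $Y$ with $f^{-1}(V)\cap C\cap X\subseteq U$, and conversely. The primed part is where I expect the main obstacle. A point $z'\in C$ whose base point $z$ is \emph{not} in $C$ can lie in $(f^{-1}V)'$ but not in $U'$, and the homeomorphism on $C\cap X$ says nothing about such $z$. The first thing I will try is to replace $C\cap X$ by the larger set $\delta_X(C)=(C\cap X)\cup\{z: z'\in C\}$ and show that it is again a $Q$-subset of $X$. If this holds, the homeomorphism $f|_{\delta_X(C)}$ controls both parts at once, and the neighbourhood matching is a routine verification. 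To get $Q$ for $\delta_X(C)$, I would exhibit it as a closed subset of a $Q$-set, for instance a closed piece of $\delta_X^{-1}$ of something already known to be $Q$, so that closed-heredity applies. Failing that, I would try to show that the primed points of $C$ accumulating at $x$ already have base points that are controlled by $C\cap X$. This does hold in the prototypical case, compactness: every neighbourhood of $x$ in $A(X)$ must contain all but finitely many of the primed points of $C$ that cluster at $x$.

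Once the neighbourhood correspondence is established in both directions, $A(f)|_C$ is a homeomorphism. Together with the fact that $A(Y)$ has $P$, this makes $A(f)$ a witness map, so $A(X)$ is a $Q\text{--}P$ space.
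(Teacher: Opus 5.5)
You have located the right obstacle, but the proposal stops there. Neither of your two suggestions is carried out, and under the stated hypotheses neither can be. Closed-heredity gives you $Q$ only for $C\cap X$. Nothing makes $\delta_X(C)$ a $Q$-set, and the primed points of $C$ whose base points lie outside $C$ are genuinely uncontrolled. The paper's own proof passes over the same point. In its continuity step, the set $W\cup(W'\setminus\{x'\})$, with $W$ open in $K\cap X$, is in general not a neighbourhood of $x$ in $K$.

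In fact the statement is false as written. Take $P$ = Hausdorff (if $Y$ is $T_2$ then so is $A(Y)$) and $Q$ = metrizable (which is hereditary). Let $X=\mathbb{N}$ with the cofinite topology. Every infinite subspace of $X$ carries the cofinite topology and so is not Hausdorff. Hence the $Q$-subsets of $X$ are exactly the finite, discrete ones, and the identity onto discrete $\mathbb{N}$ is a witness map; so $X$ is $Q\text{--}P$. In $A(X)$, however, each $C_n=\{n\}\cup\{m' : m\neq n\}$ is a convergent sequence homeomorphic to $\omega+1$. So $C_n$ is metrizable, while $\delta_X(C_n)=X$ is not. Suppose $g:A(X)\to Z$ were any witness map with $Z$ Hausdorff. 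Then for every $n$, $g(m')\to g(n)$ in $Z$ as $m\to\infty$. These sequences for different $n$ share a tail, so uniqueness of limits gives $g(n)=g(k)$ for all $n,k$, contradicting injectivity. Thus $A(X)$ is not $Q\text{--}P$ at all, and no choice of witness, in particular not your $A(f)$, can rescue the argument.

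What is missing is a hypothesis that makes your first idea work. Assume $Q$ is preserved by continuous images, which holds for compactness, countable compactness, pseudocompactness and Lindel\"ofness. The argument then runs as follows.
\begin{itemize}
\item Since $\delta_X|_C$ is continuous, $\delta_X(C)$ has $Q$, so $f|_{\delta_X(C)}$ is a homeomorphism.
\item The subspace $\delta_X(C)\cup\delta_X(C)'$ of $A(X)$ carries exactly the duplicate topology of $\delta_X(C)$.
\item On this subspace, $A(f)$ is the duplicate of the homeomorphism $f|_{\delta_X(C)}$. Hence it is a homeomorphism onto the corresponding subspace of $A(Y)$.
\item Restricting to $C$ finishes the proof.
\end{itemize}
With this assumption, closed-heredity is not even needed. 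Without it, neither your outline nor the paper's argument can be completed.
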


\begin{proof}
    Let $X$ be a  $Q\text{--}P$ space with $Y$ its witness and $f:X\to Y$, the witness map. Then the map $f$ induces a bijection $H: A(X) \to A(Y)$ defined by   
\begin{equation}
H(z)=
\begin{cases}
f(z), & \text{if } z\in X,\\[4pt]
f(x)', & \text{if } z = x'\in X'.
\end{cases}
\end{equation}

Since $P$ in inverse-invariant under $\delta_Y$, $A(Y)$ also satisfies $P$.

Let $K$ be a subspace of $A(X)$ satisfying the property $Q$. As $X$ is closed in $A(X)$, $K\cap X$ is closed in $K$ and therefore satisfies the property $Q$. So $f_{K\cap X} =f|_{_{K\cap X}} : K\cap X \to f(K\cap X)$ is a homeomorphism. 

We show that $H_K= H|_{_K} : K \to H(K)$ is a homeomorphism. 

\textbf{$H_K$ is continuous: } Let $U$ be an open subset of $H(K)$. Let $x\in H_K^{-1}(U)$. If $x \in H_K^{-1}(U)\cap X'$, $x\in \{x\} \subseteq H_K^{-1}(U)$. 

If $x \in X \cap H_K^{-1}(U)$, $H_K (x) = f(x) \in U\cap Y$. Since $U\cap Y = (U\cap H(K))\cap Y = U\cap (H(K)\cap H(X)) = U \cap H(K\cap X) = U\cap f(K \cap X)$, $U\cap Y$ is open in $f(K\cap X)$ containing $f(x)$. So there exists an open set $W$ in $K\cap X$ containing $x$ such that $f(W) = U\cap Y$. Then $W \cup (W'-\{x'\})$ is contained in $H_K^{-1}(U)$.

\textbf{$H_K$ is open:} Let $U$ be an open subset of $K$. We show that $H_K(U)$ is open in $H(K)$. Let $y \in H_K (U)$. If $y\in Y',$ then $y\in \{y\} \subseteq H_K (U)$. Suppose $y \in Y$. Then $y \in  H_K (U)\cap Y$. By definition of $H$, it follows that there exists a unique $x \in K\cap X$ such that $H(x) = f(x) = y$. There exists a basic open neighbourhood of $x$ of the form $B\cup (B'-\{x'\})$ such that $B\cup (B'-\{x'\}) \subseteq U$. Then $f_K(B) \cup (f_K(B)'- \{f(x)'\}$ is a basic neighbourhood of $f(x)$ contained in $H_K (U)$ as $f_K$ is a homeomorphism.

\end{proof}

We say that a space $X$ is epi-$R$ (where $R$ is a topological property stronger than or equal to Hausdorff) if there exists a coarser topology on $X$ which satisfies the property $R$. According to this terminology epi-metrizable, epi-$T_4$, epi-$T_3$ are respectively well known submetrizable \cite{gg84}, epinormal \cite{ak16}, epiregular \cite{alz18}. We denote $X$ with this coarser topology by $X'$. Theorem \ref{pg1.1} trivially asserts that if every epi-$R$ space satisfies the property $P$, then it is always  $Q\text{--}P$, for any topological property $Q$. For instance,as every submetrizable space is always Hausdorff, it is $C$-Hausdorff, $CC$-Hausdorff, $Ps$-Hausdorff and so on. Therefore, the interesting case arises when an epi-$R$ space does not satisfy $P$. The following theorem, in this case,  provides sufficient conditions for an epi-$R$ space to be a $Q\text{--}P$ space. But we first state the following lemma without proof,

\begin{lemma}\label{lem0} Let $P$ and $Q$ be topological properties such that Q is invariant under continuous maps and closed-hereditary. Further, if any space satisfying the property $P$ contains a subspace $E$ satisfying the property $Q$, then $E$ is closed in the space.  

    Any continuous bijection from a space satisfying the property $Q$ onto a space satisfying the property $P$ is a homeomorphism.   
\end{lemma}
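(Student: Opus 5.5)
Let $g:X\to Y$ be a continuous bijection, where $X$ has property $Q$ and $Y$ has property $P$. Since $g$ is a continuous bijection, it is a homeomorphism as soon as it is a closed map. So I will show that $g$ sends closed subsets of $X$ to closed subsets of $Y$; this is the same template as the classical fact that a continuous bijection from a compact space onto a Hausdorff space is a homeomorphism.

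The argument has three steps. Fix a closed set $F\subseteq X$.
\begin{enumerate}
\item Since $Q$ is closed-hereditary and $X$ satisfies $Q$, the subspace $F$ also satisfies $Q$.
\item Since $Q$ is invariant under continuous maps, the image $g(F)$, with its subspace topology from $Y$, satisfies $Q$. Here one uses that $g|_F:F\to g(F)$ is continuous for the subspace topologies, which holds because restrictions of continuous maps are continuous.
\item By the last hypothesis, a subspace $E$ with property $Q$ inside a space with property $P$ is closed. Applying this to $E=g(F)\subseteq Y$ gives that $g(F)$ is closed in $Y$.
\end{enumerate}
Hence $g$ is a closed map. Because $g$ is a bijection, $(g^{-1})^{-1}(F)=g(F)$, so $g^{-1}$ is continuous. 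Together with the continuity of $g$, this shows that $g$ is a homeomorphism.

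The main point needing care is how the hypothesis ``if any space satisfying $P$ contains a subspace $E$ satisfying $Q$, then $E$ is closed'' is read. I read it as a statement about every $P$-space and every $Q$-subspace of it. Under that reading, step 3 applies to the particular pair $g(F)\subseteq Y$ with no further work. The remaining steps are routine, and no earlier results of the paper are needed beyond the definitions.
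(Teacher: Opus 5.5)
Your proof is correct and is the closed-map argument the paper intends. The paper omits the proof as straightforward and treats the compact-onto-Hausdorff case (Lemma~\ref{lem1}) as the model, which is exactly your template: closed subsets inherit $Q$, continuous images preserve $Q$, and $Q$-subspaces of a $P$-space are closed, so $g$ is closed and hence $g^{-1}$ is continuous.
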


The proof of the above lemma is straightforward.  But using this lemma we can immediately  conclude the following two lemmas. Since compactness is preserved under continuous maps and is closed-hereditary, and any compact subspace of a Hausdorff space is closed, therefore the following lemma \ref{lem1} follows. Analogously, as countably compact is continuous invariant and closed hereditary and any countably compact subspace of first countable Hausdorff space, is closed, the Lemma \ref{lem2} also follows. That a countably compact subspace of a first-countable Hausdorff space is closed can be seen as follows:    Let $E$ be a countably compact subspace of a $1^{st}$ countable Hausdorff space. Let $x \in \operatorname{cl}_X E \setminus E$. Then there exists an infinite sequence $\{x_n:n \in \mathbb{N}\}$ converges to $x$. As $X$ is Hausdorff, the limit $x$ is unique. Then the set $\{x_n: n \in \mathbb{N}\}$ is a countably infinite closed discrete subspace of $E$, which contradicts the countable compactness of $E$, as it is well known that a space is countably compact if and only if every countable closed discrete subset of it is finite. .    

\begin{lemma}\label{lem1}
    Any continuous bijection from a compact space onto a Hausdorff space is a homeomorphism.
\end{lemma}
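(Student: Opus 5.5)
The plan is to derive Lemma~\ref{lem1} directly from Lemma~\ref{lem0}, taking $Q$ to be compactness and $P$ to be the Hausdorff property, and then to check the hypotheses of Lemma~\ref{lem0} one at a time. First, compactness is preserved under continuous maps, since the continuous image of a compact space is compact. Second, compactness is closed-hereditary: if $F$ is closed in a compact space and $\{U_\alpha\}$ covers $F$ by open sets, then adding $X\setminus F$ gives an open cover of $X$, and a finite subcover of that yields a finite subcover of $F$. Third, the closedness hypothesis asks that any compact subspace $E$ of a Hausdorff space $Y$ be closed. For this, fix $y\notin E$; for each $e\in E$ choose disjoint open sets $U_e\ni e$ and $V_e\ni y$. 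Finitely many $U_{e_1},\dots,U_{e_n}$ cover $E$, and then $\bigcap_i V_{e_i}$ is an open neighbourhood of $y$ missing $E$. So $Y\setminus E$ is open.

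With these checks done, Lemma~\ref{lem0} gives the conclusion at once. Since Lemma~\ref{lem0} was stated without proof, I would also spell out its short argument in this case. Let $f:X\to Y$ be a continuous bijection with $X$ compact and $Y$ Hausdorff. It suffices to show that $f$ is a closed map, because then $f^{-1}$ is continuous. Take $F$ closed in $X$. Then $F$ is compact by closed-heredity, so $f(F)$ is compact by continuity, and $f(F)$ is closed in $Y$ by the Hausdorff step. Hence $(f^{-1})^{-1}(F)=f(F)$ is closed, and $f$ is a homeomorphism.

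I expect no real obstacle here. The only step with any content is that compact subsets of Hausdorff spaces are closed, which is the standard finite-intersection-of-neighbourhoods argument above. Everything else is the formal bookkeeping of matching the hypotheses of Lemma~\ref{lem0}.
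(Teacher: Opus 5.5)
Your proposal is correct and matches the paper's route: the paper derives Lemma~\ref{lem1} by applying Lemma~\ref{lem0} with $Q$ = compactness and $P$ = Hausdorff, using that compactness is preserved by continuous maps, is closed-hereditary, and that compact subsets of Hausdorff spaces are closed. You also write out the closed-map argument behind Lemma~\ref{lem0}, which the paper dismisses as ``straightforward,'' and your version of it is correct.
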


\begin{lemma}\label{lem2}
    Any continuous bijection from a countably compact space onto a first countable Hausdorff space is a homeomorphism.
\end{lemma}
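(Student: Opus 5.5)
The statement to prove is Lemma \ref{lem2}: a continuous bijection $g:X\to Y$ from a countably compact space $X$ onto a first countable Hausdorff space $Y$ is a homeomorphism. I plan to derive it from Lemma \ref{lem0}, taking $Q$ to be countable compactness and $P$ to be ``first countable Hausdorff''. Lemma \ref{lem0} needs three hypotheses, which I will check in turn. Once they hold, the lemma applies directly to $g$.

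First, countable compactness is preserved by continuous maps. Suppose $\{V_n\}$ is a countable open cover of $g(X)$. Then $\{g^{-1}(V_n)\}$ is a countable open cover of $X$, so it has a finite subcover. The images of that finite subcover cover $g(X)$.

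Second, countable compactness is closed-hereditary. Let $F$ be closed in $X$ and let $\{U_n\cap F\}$ be a countable cover of $F$ by relatively open sets, with each $U_n$ open in $X$. Adding $X\setminus F$ to the $U_n$ gives a countable open cover of $X$. A finite subcover of it yields a finite subcover of $F$.

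Third, every countably compact subspace $E$ of a first countable Hausdorff space $Y$ is closed. This is the only step needing real care. I would argue by contradiction, as the paper sketches just before the lemma. Take $y\in \mathrm{cl}_Y E\setminus E$. Using a countable decreasing local base at $y$, pick $x_n\in E$ with $x_n\to y$, and pass to a subsequence of distinct points; this is possible because $y\notin E$ forces infinitely many distinct terms. Since $Y$ is Hausdorff, $y$ is the unique limit and hence the unique accumulation point of $\{x_n\}$ in $Y$. Therefore $D=\{x_n:n\in\mathbb{N}\}$ has no accumulation point in $E$. So $D$ is an infinite closed discrete subset of $E$, which contradicts countable compactness of $E$. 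The hard part is justifying ``no accumulation point in $E$''. For any $z\in E$, $z\neq y$, Hausdorffness gives disjoint neighbourhoods of $z$ and $y$. The neighbourhood of $y$ contains a tail of the sequence, so the neighbourhood of $z$ meets only finitely many $x_n$.

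With the three hypotheses verified, I would also spell out the mechanism behind Lemma \ref{lem0} rather than rely on it blindly. Let $F$ be closed in $X$. Then $F$ is countably compact by the second step, so $g(F)$ is countably compact by the first step, and hence closed in $Y$ by the third step. Thus $g$ is a closed continuous bijection, and therefore a homeomorphism.
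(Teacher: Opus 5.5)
Your proposal is correct and follows essentially the same route as the paper. The paper also obtains Lemma~\ref{lem2} from Lemma~\ref{lem0}, with $Q$ taken as countable compactness and $P$ as first countable Hausdorff, and its key point is the same as yours: a sequence in $E$ converging to a point of $\operatorname{cl}_Y E\setminus E$ yields an infinite closed discrete subset of $E$. You just fill in details the paper leaves implicit, namely choosing distinct terms, ruling out accumulation points in $E$, and the closed-map argument behind Lemma~\ref{lem0}.
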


\begin{theorem}\label{pg13.1b}
    Every non-$P$, epi-$R$ space is a $Q\text{--}P$ if every space satisfying the property $R$ also satisfies the property $P$ and if either of the following conditions holds.
    
    (i) any subspace of $X$ satisfying the property $Q$ is contained in a compact subspace

    (ii) $Q$ itself is continuous invariant, closed hereditary and any subspace satisfying the property $Q$ is closed in the space satisfying the property $R$.  
\end{theorem}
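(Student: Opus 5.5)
The natural witness is the coarser topology itself. Let $X$ be a non-$P$, epi-$R$ space, and let $X'$ denote $X$ equipped with a coarser topology satisfying $R$. Since every $R$-space is a $P$-space, $X'$ satisfies $P$, so it is a legitimate witness space. I take the witness map to be the identity $\mathrm{id}:X\to X'$. This map is a bijection, and because the topology of $X'$ is coarser, it is continuous. Its restriction to any subspace $C$ is therefore a continuous bijection $C\to C'$, where $C'$ carries the subspace topology from $X'$. What remains is to show that $\mathrm{id}|_C$ is a homeomorphism whenever $C$ satisfies $Q$; the two hypotheses give two routes to this.

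\textbf{Condition (i).} Let $C$ satisfy $Q$, and choose a compact subspace $K\supseteq C$ of $X$. The restriction $\mathrm{id}|_K:K\to K'$ is a continuous bijection from a compact space. Its target $K'$ is a subspace of $X'$, which satisfies $R$ and hence is at least Hausdorff, since $R$ is assumed stronger than or equal to Hausdorff. By Lemma \ref{lem1}, $\mathrm{id}|_K$ is a homeomorphism. As in the proof of Theorem \ref{pg4}, a restriction of a homeomorphism is again a homeomorphism onto its image, so $\mathrm{id}|_C$ is a homeomorphism.

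\textbf{Condition (ii).} Here I would argue directly, essentially reproving Lemma \ref{lem0} in this setting. Let $C$ satisfy $Q$ and let $F$ be closed in $C$. Because $Q$ is closed hereditary, $F$ satisfies $Q$. Because $Q$ is invariant under continuous maps, the image $\mathrm{id}(F)=F'$ satisfies $Q$ in $X'$. By hypothesis, a $Q$-subspace of a space with property $R$ is closed in that space, so $F'$ is closed in $X'$ and hence closed in $C'$. Thus $\mathrm{id}|_C$ is a closed continuous bijection, and therefore a homeomorphism.

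The main obstacle I expect is in (ii): correctly interpreting the phrase ``closed in the space satisfying the property $R$''. I read it as saying that $Q$-subsets of $X'$ are closed in $X'$, and the proof needs exactly this to get $F'$ closed in $X'$, from which closedness in $C'$ follows. In (i), the only subtlety is that the hypothesis ``stronger than or equal to Hausdorff'' is what makes Lemma \ref{lem1} applicable. Finally, the non-$P$ assumption plays no logical role in the argument; it only signals that the conclusion is nontrivial, since otherwise Theorem \ref{pg1.1} already applies.
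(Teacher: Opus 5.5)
Your proof is correct and follows the paper's argument. The paper also takes the identity onto the coarser $R$-topology as the witness map, applies Lemma \ref{lem1} to a compact $K \supseteq C$ for (i), and uses the closed-map reasoning behind Lemma \ref{lem0} for (ii). Your direct version of (ii) is slightly cleaner than the paper's citation of that lemma: you get closedness of $F'$ in $X'$ first, so you never need the subspace $C'$ itself to satisfy $R$.
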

\begin{proof}
    If $X$ is epi-$R$, then the identity map $I: X\to X'$ is continuous where $X'$ is Hausdorff and satisfies the property $R$. Then $X'$ also satisfies the property $P$.  Let $C$ be a subspace of $X$ satisfying the property $Q$. 
    
    (i) If $C$ is contained in a compact subspace $K$. Then from the above well-versed lemma \ref{lem1}, it follows that, I, restricted on $K$ is a homeomorphism which further restricted on $C$ is also a homeomorphism. So every epi-$R$ space is  $Q\text{--}P$

    (ii) If $Q$ is continuous invariant, then $C$ is also $Q$ in $X'$. Since $X'$ satisfies the property $R$, $C$ is closed in $X'$. Then the identity map restricted on $C$ is continuous bijection. Hence by lemma \ref{lem0}, it follows that the identity map on $C$ is a homeomorphism. Since the property $R$ implies $P$, every epi-$R$ space is  $Q\text{--}P$. 
\end{proof}

As an immediate consequence, every submetrizable (resp., epinormal, epiregular) space is $C$-normal, $CC$-normal, $C$-regular, and $CC$-regular. Infact, since the notions of compactness and countable compactness coincide in submetrizable spaces, $C$-normality and $CC$-normality are equivalent in this class of spaces; a similar conclusion holds for $C$-regularity and $CC$-regularity.

\begin{remark}\label{rem0}
The requirement that $Q$ be closed-hereditary is essential in Lemma~\ref{lem0}. For instance, consider the case where $Q$ is pseudocompact. It is well known that pseudocompactness is not closed-hereditary, but rather regular-closed-hereditary. In support of this observation, we present an example of a submetrizable space for which Lemma~\ref{lem0} and the Theorem \ref{pg13.1b} fail to hold, that is, the space is not  $Ps$-regular,$Ps$-normal, $Ps$-Tychonoff. 
\end{remark}

 \section{$C$-Hausdorff, $Ps$-Hausdorff, $CC$-Hausdorff}
In this section we derive properties of $C$-Hausdorff, $Ps$-Hausdorff, $CC$-Hausdorff spaces, few of them as direct corollaries of the previous section.

As a special case of the Theorem \ref{pg12}, we state the following corollary without proof.

\begin{corollary}
$C$-Hausdorff, $Ps$-Hausdorff, $CC$-Hausdorff are hereditary properties.
\end{corollary}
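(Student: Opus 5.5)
The plan is to obtain the corollary directly from Theorem \ref{pg12}. That theorem says the class of $Q\text{--}P$ spaces is hereditary provided $P$ is hereditary and $Q$ is absolute. Here $P$ is Hausdorffness in all three cases, and $Q$ is compactness, pseudocompactness, or countable compactness. So two facts need to be checked.

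First, Hausdorffness is hereditary. If $Z$ is Hausdorff and $S\subseteq Z$, two distinct points of $S$ are separated by disjoint open sets $U,V$ of $Z$. Then $U\cap S$ and $V\cap S$ are disjoint open sets of $S$ separating them.

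Second, the three properties $Q$ are absolute. Let $Z\subseteq Y\subseteq X$. The subspace topology that $Z$ inherits from $Y$ coincides with the one it inherits from $X$, since $(U\cap Y)\cap Z=U\cap Z$. Compactness, countable compactness and pseudocompactness are each defined purely in terms of the topology of $Z$ itself. Hence $Z$ has any of these properties as a subspace of $Y$ exactly when it has it as a subspace of $X$. The paper already lists all three as absolute properties just before Theorem \ref{pg12}.

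Applying Theorem \ref{pg12} with these choices shows that every subspace of a $C$-Hausdorff (respectively $CC$-Hausdorff, $Ps$-Hausdorff) space is again $C$-Hausdorff (respectively $CC$-Hausdorff, $Ps$-Hausdorff). Concretely, the witness map for a subspace $A$ is the restriction $f|_A\colon A\to f(A)$ of the original witness map. Its witness space is $f(A)$, which is Hausdorff as a subspace of a Hausdorff space.

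There is no real obstacle. The only point needing care is that ``$Q$'' is read intrinsically, as a property of the subspace with its own topology, and not relative to the ambient space; absoluteness settles exactly this.
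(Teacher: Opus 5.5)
Your proposal is correct and matches the paper's route: the paper states this corollary without proof as a special case of Theorem~\ref{pg12}, taking $P$ to be Hausdorffness (hereditary) and $Q$ to be compactness, countable compactness, or pseudocompactness (absolute). Your explicit witness $f|_A\colon A\to f(A)$ is exactly the map constructed in that theorem's proof.
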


Similarly, the next corollary follows from the Theorem \ref{pg2}.

\begin{corollary}
$C$-Hausdorff, $Ps$-Hausdorff, $CC$-Hausdorff are topological properties.    
\end{corollary}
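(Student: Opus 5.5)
This follows from Theorem \ref{pg2} applied three times, taking $Q$ to be compactness, pseudocompactness and countable compactness respectively, and $P$ to be the Hausdorff property. Theorem \ref{pg2} is stated for arbitrary properties $P$ and $Q$, so no extra hypotheses need to be checked.

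For completeness, here is the argument in each case. Let $X$ be $C$-Hausdorff (respectively $Ps$-Hausdorff, $CC$-Hausdorff). Let $\phi:X\to Z$ be a witness map onto a Hausdorff space $Z$, and let $g:X\to Y$ be a homeomorphism. We claim that $\psi=\phi\circ g^{-1}:Y\to Z$ is a witness map for $Y$.

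Clearly $\psi$ is a bijection, since it is a composite of bijections.

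Let $C\subseteq Y$ have property $Q$. Compactness, countable compactness and pseudocompactness are all topological properties, so they are preserved by the homeomorphism $g^{-1}|_C:C\to g^{-1}(C)$. Hence $g^{-1}(C)$ has property $Q$ as a subspace of $X$.

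Because $\phi$ is a witness map, $\phi|_{g^{-1}(C)}:g^{-1}(C)\to \phi(g^{-1}(C))=\psi(C)$ is a homeomorphism. Therefore
\[
\psi|_C=\phi|_{g^{-1}(C)}\circ g^{-1}|_C
\]
is a composite of two homeomorphisms, and so is itself a homeomorphism onto $\psi(C)$.

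Thus $Z$ witnesses that $Y$ is $C$-Hausdorff (respectively $Ps$-Hausdorff, $CC$-Hausdorff). The only step needing any care is the transfer of $Q$ from $C$ to $g^{-1}(C)$, and it is immediate because each $Q$ involved is a topological property.
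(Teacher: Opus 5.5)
Your proposal is correct and follows the paper's own route: the paper obtains this corollary directly from Theorem \ref{pg2}, whose proof uses the same composite witness map $\phi\circ f^{-1}$. Your explicit check that $g^{-1}(C)$ inherits $Q$ because $Q$ is invariant under homeomorphisms fills in a detail the paper leaves implicit.
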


The following corollaries are also direct consequence of Theorem \ref{pg4} with proper choices of $P,Q$ and $R$

\begin{corollary}\label{pg14}
    Let $X$ be a $C$-Hausdorff topological space. If every pseudocompact subset  of $X$ is contained in a compact subset of $X$, then $X$ is $Ps$-Hausdorff topological space. [Take $Q$= compact, $P$= Hausdorff, $R$=Pseudocompact]
\end{corollary}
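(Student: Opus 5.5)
The plan is to read Corollary~\ref{pg14} as the special case of Theorem~\ref{pg4} in which the property $P$ there is compactness, $Q$ there is pseudocompactness, and $R$ there is Hausdorffness. The bracketed assignment in the statement merely relabels the letters. Under this choice, a ``$P\text{--}R$ space'' in Theorem~\ref{pg4} is exactly a $C$-Hausdorff space, and a ``$Q\text{--}R$ space'' is exactly a $Ps$-Hausdorff space. The hypothesis of Theorem~\ref{pg4}, that every subspace with property $Q$ lies in a subspace with property $P$, becomes our assumption that every pseudocompact subset of $X$ is contained in a compact subset of $X$.

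To keep the argument self-contained, I would also run the short direct argument. Let $f:X\to Y$ be a witness map for $C$-Hausdorffness, with $Y$ Hausdorff, and let $C\subseteq X$ be pseudocompact. By hypothesis choose a compact $K\subseteq X$ with $C\subseteq K$. Since $f$ is a witness map, $f|_K:K\to f(K)$ is a homeomorphism with respect to the subspace topologies. Restricting a homeomorphism to a subset gives a homeomorphism onto its image, so $f|_C=(f|_K)|_C:C\to f(C)$ is a homeomorphism. The subspace topology on $C$ inherited from $K$ equals the one inherited from $X$, and likewise for $f(C)$ in $f(K)$ versus $Y$. Hence the same $f$ and the same Hausdorff $Y$ witness that $X$ is $Ps$-Hausdorff.

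No step here is a genuine obstacle. The only point needing care is the last one: checking that subspace topologies are transitive, so that ``homeomorphism as subspaces of $K$ and $f(K)$'' coincides with ``homeomorphism as subspaces of $X$ and $Y$.'' It is also worth noting that pseudocompactness of $C$ is used only through the containment hypothesis, and Hausdorffness of $Y$ is inherited unchanged from the $C$-Hausdorff witness.
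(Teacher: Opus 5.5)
Your proposal is correct and follows the paper, which derives the corollary directly from Theorem~\ref{pg4}; your self-contained argument is just that theorem's proof, with compactness, pseudocompactness and Hausdorffness in the roles of $P$, $Q$ and $R$. That instantiation is the right one, even though the bracketed assignment printed in the statement labels the letters differently.
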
 

\begin{corollary}\label{pg15}
    Let $X$ be a $CC$-Hausdorff topological space. If every pseudocompact subset  of $X$ is contained in a countably compact subset of $X$, then  $X$ is $Ps$-Hausdorff topological space. [Take $Q$=countably compact, $P$=Hausdorff, $R$=Countably compact]
\end{corollary}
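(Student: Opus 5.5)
This is a direct specialization of Theorem \ref{pg4}, so the plan is simply to match the roles of the properties there and check the hypothesis. In Theorem \ref{pg4}, a space that is $P\text{--}R$ is shown to be $Q\text{--}R$ whenever every $Q$-subspace lies inside a $P$-subspace. Here I would take the ``inner'' property to be countable compactness, the ``target'' property to be pseudocompactness, and $R$ to be Hausdorff. (The bracketed labelling in the statement names the letters differently, but the substitution is the same.) The hypothesis of Theorem \ref{pg4} then reads: every pseudocompact subset of $X$ is contained in a countably compact subset of $X$. This is exactly the assumption of the corollary.

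To be self-contained, I would also give the two-line argument directly. Let $f\colon X\to Y$ be the witness map for $CC$-Hausdorffness, with $Y$ Hausdorff. Let $C\subseteq X$ be pseudocompact, and choose a countably compact $C'\supseteq C$. Since $X$ is $CC$-Hausdorff, $f|_{C'}\colon C'\to f(C')$ is a homeomorphism. Restricting a homeomorphism to a subspace gives a homeomorphism onto the image, so $f|_C = (f|_{C'})|_C\colon C\to f(C)$ is a homeomorphism. Hence the same $f$ and $Y$ witness that $X$ is $Ps$-Hausdorff.

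There is no real obstacle. The only point needing care is that the subspace topology $C$ inherits from $C'$ coincides with the one it inherits from $X$, and likewise for $f(C)$ inside $f(C')$ and inside $Y$. This follows from transitivity of subspace topologies, which is what justifies the restriction step above.
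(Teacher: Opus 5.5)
Your proposal is correct and matches the paper, which obtains this corollary as a direct instance of Theorem~\ref{pg4}. Your assignment of properties (countable compactness for the enveloping subsets, pseudocompactness for the given subsets, Hausdorff for the witness) is the right one, it corrects the garbled labelling in the bracket of the statement, and your two-line direct argument is just the proof of Theorem~\ref{pg4} specialized to this case.
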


\begin{corollary}\label{pg16}
    Let $X$ be a $C$-Hausdorff topological space. If every Lindel$\ddot{o}$f subset of $X$ is contained in a compact subset of $X$, then $X$ is $L$-Hausdorff topological space.[Take $Q$=Compact,$P$=Hausdorff, $R$ = Lindel$\ddot{o}$f]
\end{corollary}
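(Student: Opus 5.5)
This is an instance of Theorem \ref{pg4} with $R=$ Hausdorff as the target property, $P=$ compact as the property the space already handles, and $Q=$ Lindel\"of as the property we want it to handle. The hypothesis of Corollary \ref{pg16} is exactly the containment condition of Theorem \ref{pg4}: every subspace of $X$ with property $Q$ (Lindel\"of) lies inside a subspace with property $P$ (compact). So the plan is to check that the data match and then invoke Theorem \ref{pg4}. For the record, the bracketed assignment of letters in the statement should read $P=$ compact, $Q=$ Lindel\"of, $R=$ Hausdorff in the notation of Theorem \ref{pg4}.

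Concretely, let $X$ be $C$-Hausdorff, with a Hausdorff witness $Y$ and witness map $f:X\to Y$. Take any Lindel\"of subspace $L\subseteq X$. By hypothesis there is a compact $K\subseteq X$ with $L\subseteq K$. Since $X$ is $C$-Hausdorff, $f|_K:K\to f(K)$ is a homeomorphism between subspace topologies.

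The restriction of a homeomorphism to any subset is a homeomorphism onto that subset's image. Hence $f|_L=(f|_K)|_L:L\to f(L)$ is a homeomorphism, using that the subspace topology of $L$ inherited from $K$ agrees with the one inherited from $X$, and similarly for $f(L)$ inside $f(K)$ and $Y$. Therefore the same $f$ and $Y$ witness that $X$ is $L$-Hausdorff.

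There is no real obstacle here. The only point needing care is the transitivity of subspace topologies used in the last step, which is standard.
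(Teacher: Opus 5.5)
Your proposal is correct and matches the paper's approach: the paper obtains this corollary directly from Theorem \ref{pg4}, and your argument (restrict the homeomorphism $f|_K$ to a Lindel\"of $L\subseteq K$) is just that theorem's proof written out for this case. Your reassignment $P=$ compact, $Q=$ Lindel\"of, $R=$ Hausdorff is the right way to match the notation of Theorem \ref{pg4}; the bracketed assignment in the statement mislabels the letters.
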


\begin{corollary}\label{pg17}
 Let $X$ be a $C$-Hausdorff topological space. If every countably compact subset  of $X$ is contained in a compact subset of $X$, then  $X$ is $CC$-Hausdorff topological space. [Take $Q$ = conmpact, $P$ = Hausdorff, $R$ = countably compact]   
\end{corollary}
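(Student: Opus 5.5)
This is a direct instance of Theorem \ref{pg4}, so the plan is to match its letters to the present situation. In Theorem \ref{pg4} the hypothesis is that $X$ is a $P\text{--}R$ space, and the conclusion is that $X$ is a $Q\text{--}R$ space; the side condition is that every subspace with property $Q$ sits inside a subspace with property $P$. Here I take $P$ = compact, $R$ = Hausdorff and $Q$ = countably compact. (The bracketed labels in the statement permute the letters relative to Theorem \ref{pg4}, but this is the substitution that makes the hypotheses line up.) With this choice:
\begin{itemize}
\item a $C$-Hausdorff space is a $P\text{--}R$ space;
\item the assumption that every countably compact subset of $X$ lies in a compact subset of $X$ is exactly the side condition of Theorem \ref{pg4};
\item the conclusion reads that $X$ is a countably compact--Hausdorff space, i.e.\ $CC$-Hausdorff.
\end{itemize}

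If one prefers a self-contained argument, I would unwind the proof of Theorem \ref{pg4} in three steps.
\begin{enumerate}
\item Fix a witness map $f:X\to Y$ for $C$-Hausdorffness, with $Y$ Hausdorff. This same $f$ and $Y$ will serve as the witness for $CC$-Hausdorffness.
\item Given a countably compact $C\subseteq X$, choose a compact $K\subseteq X$ with $C\subseteq K$. By the defining property (\ref{2.1a}), $f|_K:K\to f(K)$ is a homeomorphism.
\item Restricting a homeomorphism to a subspace gives a homeomorphism onto its image. The subspace topology on $C$ inherited from $K$ agrees with the one inherited from $X$, and likewise $f(C)$ carries the same topology whether viewed in $f(K)$ or in $Y$. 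Hence $f|_C:C\to f(C)$ is a homeomorphism.
\end{enumerate}

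The only point needing care is the bookkeeping of which property plays which role in Theorem \ref{pg4}. Mathematically there is no obstacle: the transitivity of subspace topologies in step 3 is standard.
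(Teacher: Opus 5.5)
Your proposal is correct and follows the paper's own route: the corollary is Theorem~\ref{pg4} specialized, and your three-step unwinding is exactly the restriction-of-a-homeomorphism argument in that theorem's proof. You are also right that the consistent substitution is $P$ = compact, $Q$ = countably compact, $R$ = Hausdorff, since the bracketed labels in the statement permute the roles relative to Theorem~\ref{pg4}.
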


\begin{corollary}\label{pg37}
Let $X$ be a $CC$-Hausdorff topological space. If every Lindel$\ddot{o}$f subset of $X$ is contained in a countably compact subset of $X$, then $X$ is $L$-Hausdorff topological space.[Take $Q$=compact, $P$=Hausdorff, $R$ = countably compact]
\end{corollary}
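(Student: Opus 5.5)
The statement is Corollary \ref{pg37}: a $CC$-Hausdorff space in which every Lindel\"{o}f subset lies inside a countably compact subset is $L$-Hausdorff. I will obtain it from Theorem \ref{pg4} by fixing the roles of the three properties and checking the hypothesis directly.

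The assignment is as follows. The stronger subspace property, called $P$ in Theorem \ref{pg4}, is countable compactness. The weaker one, called $Q$ there, is the Lindel\"{o}f property. The target property $R$ is Hausdorffness. With these choices, a $P\text{--}R$ space is exactly a $CC$-Hausdorff space, and a $Q\text{--}R$ space is exactly an $L$-Hausdorff space. The hypothesis of Theorem \ref{pg4} then reads: every subspace of $X$ with property $Q$ is contained in a subspace with property $P$. This is word for word the assumption of the corollary.

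For transparency I would also run the argument directly rather than only citing the theorem. Let $f\colon X\to Y$ be a witness map for $CC$-Hausdorffness, so $Y$ is Hausdorff. Let $L\subseteq X$ be Lindel\"{o}f and choose a countably compact $C'\supseteq L$. Then $f|_{C'}\colon C'\to f(C')$ is a homeomorphism. Restricting a homeomorphism to a subspace gives a homeomorphism onto the image, so $f|_L\colon L\to f(L)$ is a homeomorphism as well. Hence the same $f$ and the same $Y$ witness that $X$ is $L$-Hausdorff.

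There is no genuine obstacle here; the whole difficulty is bookkeeping. The bracketed hint after the statement labels the roles as $Q$ = compact and $R$ = countably compact. That does not match the notation of Theorem \ref{pg4}, where the relevant properties are countable compactness (given) and Lindel\"{o}fness (target). So the step needing care is stating the correct instantiation explicitly, with countable compactness as the dominating property and Lindel\"{o}f as the dominated one. After that, the corollary is immediate from Theorem \ref{pg4}.
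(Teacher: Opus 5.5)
Your proposal is correct and follows the paper's route. The corollary is Theorem~\ref{pg4} instantiated with countable compactness as the dominating property, Lindel\"{o}fness as the dominated one and Hausdorffness as the witness property; your direct restriction argument is exactly the proof of that theorem, and you are right that the bracketed hint in the statement mislabels these roles.
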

\begin{corollary}\label{pg38}
Let $X$ be a $Ps$-Hausdorff topological space. If every Lindel$\ddot{o}$f subset of $X$ is contained in a pseudocompact subset of $X$, then $X$ is $L$-Hausdorff topological space.[Take $Q$=Pseudocompact, $P$= Hausdorff, $R$ = Lindel$\ddot{o}$f]
\end{corollary}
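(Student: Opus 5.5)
The corollary follows from Theorem \ref{pg4}, in the same way as Corollaries \ref{pg14}--\ref{pg17}. We only need to choose the three properties in that theorem correctly. Theorem \ref{pg4} states that a $P\text{--}R$ space $X$ is a $Q\text{--}R$ space if every subspace of $X$ with property $Q$ is contained in a subspace with property $P$. Here we take $P$ = pseudocompact (the property in which $X$ is already known to be ``$P$--Hausdorff''), $R$ = Hausdorff (the witness property), and $Q$ = Lindel\"of (the target property). The bracketed labels in the statement follow the paper's shifted naming convention; the substantive assignment is the one just given.

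The argument then runs in three steps. First, the hypothesis that $X$ is $Ps$-Hausdorff means exactly that $X$ is a $P\text{--}R$ space with $P$ = pseudocompact and $R$ = Hausdorff. So there is a Hausdorff space $Y$ and a witness map $f:X\to Y$ whose restriction to every pseudocompact subspace is a homeomorphism onto its image. Second, the hypothesis that every Lindel\"of subset $L\subseteq X$ lies in some pseudocompact subset $C\subseteq X$ is precisely the containment condition required in Theorem \ref{pg4}. Third, Theorem \ref{pg4} now gives that $X$ is a Lindel\"of--Hausdorff space, that is, $L$-Hausdorff.

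For a self-contained version, one can also repeat the proof of Theorem \ref{pg4} directly. Take a Lindel\"of set $L$ and a pseudocompact set $C\supseteq L$. The map $f|_C:C\to f(C)$ is a homeomorphism. The restriction of a homeomorphism to a subspace is a homeomorphism onto its image, and the subspace topologies on $L$ and on $f(L)$ inherited via $C$ and via $f(C)$ agree with those inherited from $X$ and from $Y$. Hence $f|_L:L\to f(L)$ is a homeomorphism, and the same $f$ and $Y$ witness that $X$ is $L$-Hausdorff.

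There is no real obstacle here. The only points needing care are correctly identifying which property plays the role of $P$ and which plays $Q$ in Theorem \ref{pg4}, and noting the transitivity of subspace topologies used in the final restriction.
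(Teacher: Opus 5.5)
Your proposal is correct and matches the paper's argument: the corollary is a direct instance of Theorem~\ref{pg4}, with pseudocompactness as the property of the subsets the given space already handles, Hausdorffness as the witness property, and Lindel\"ofness as the target property. Your assignment of roles is the right one (the paper's bracketed labels do not literally match the letters in Theorem~\ref{pg4}), and your self-contained restriction argument is just the proof of that theorem in this case.
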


\begin{remark}\label{pg18}
It follows from the Definition \ref{pg1} that every compact subspace of $C$-Hausdorff space is Hausdorff, that is, every $C$-Hausdorff space is $k$-Hausdorff.  Furthermore by the Lemma \ref{lem1}, it follows that the space $X$ is $C$-Hausdorff if and only if there exists a Hausdorff space $Y$ and a bijection $f:X \to Y$ such that $f$ is continuous on every compact subset of $X$.
\end{remark}

A space $X$ is called a $k$-space if a subset of $X$ is closed if and only if its trace on any compact subspace $K$ is closed in $K$. Equivalently, for every topological space Y, a map $f:X\to Y$ is continuous whenever $f|_{_K} : K \to Y$  is continuous for every compact subset $K\subseteq X$. Based on the Remark \ref{pg18} above, we have the following theorem.

\begin{theorem}\label{pg19}
    Every C-Hausdorff $k$-space is Hausdorff.
\end{theorem}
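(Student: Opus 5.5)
We will use the characterization in Remark \ref{pg18} together with the function-theoretic description of $k$-spaces. Let $X$ be a $C$-Hausdorff $k$-space, let $Y$ be a Hausdorff witness space, and let $f:X\to Y$ be the witness map.

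\textbf{Step 1: $f$ is continuous on every compact set.} For every compact $K\subseteq X$, the restriction $f|_K:K\to f(K)$ is a homeomorphism. Composing with the inclusion $f(K)\hookrightarrow Y$ shows that $f|_K:K\to Y$ is continuous.

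\textbf{Step 2: $f$ is continuous.} Since $X$ is a $k$-space, a map out of $X$ is continuous as soon as its restriction to each compact subset is continuous. By Step 1, $f:X\to Y$ is therefore continuous. If one prefers the closed-set formulation, take a closed $F\subseteq Y$. For each compact $K$, the set $f^{-1}(F)\cap K=(f|_K)^{-1}(F)$ is closed in $K$. Hence $f^{-1}(F)$ is closed in $X$.

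\textbf{Step 3: pull back the Hausdorff separation.} Now $f$ is a continuous injection into the Hausdorff space $Y$. Take distinct $x,y\in X$. Then $f(x)\neq f(y)$, so there are disjoint open sets $U\ni f(x)$ and $V\ni f(y)$ in $Y$. The sets $f^{-1}(U)$ and $f^{-1}(V)$ are disjoint open neighbourhoods of $x$ and $y$ in $X$. Thus $X$ is Hausdorff.

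The only step needing care is Step 2, where the $k$-space property must be applied in its function form. This form is exactly the equivalent definition recalled just before the theorem, so no real obstacle remains. Note also that Step 3 uses only injectivity of $f$, not surjectivity.
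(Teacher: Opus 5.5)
Your proposal is correct and follows the same route as the paper: the $k$-space property upgrades the witness map's continuity on compact sets to global continuity, and a continuous injection into a Hausdorff space forces $X$ to be Hausdorff. You spell out the details that the paper's one-line proof leaves implicit, namely the closed-set verification of continuity and the pullback of disjoint neighbourhoods.
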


\begin{proof}

    If a space is both $k$-space and $C$-Hausdorff, then the \wmap of $C$-Hausdorff is continuous and as the witness of $X$ is Hausdorff, so is $X$. 
    \end{proof}

\begin{theorem}\label{pg20}
 Every locall compact $C$=Hausdorff space is Hausdorff and therefore it is a $k$-space.   
\end{theorem}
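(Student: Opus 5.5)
The statement should follow directly from Theorem \ref{pg19}, once we know that a locally compact space is a $k$-space. The point to watch is which notion of local compactness is meant. I will take the usual one: every point has a compact neighbourhood, i.e. a compact set $K_x$ with $x\in \operatorname{int}K_x$. No Hausdorffness is assumed, since that is exactly what we want to derive.

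\textbf{Step 1: local compactness gives a $k$-space.} Let $A\subseteq X$ be such that $A\cap K$ is closed in $K$ for every compact $K\subseteq X$. We want $A$ closed in $X$. Take $x\in \operatorname{cl}_X A$ and a compact neighbourhood $K_x$ of $x$. Then $x\in \operatorname{cl}_{K_x}(A\cap K_x)$: any open $U\ni x$ gives an open set $U\cap\operatorname{int}K_x\ni x$, which meets $A$ and hence meets $A\cap K_x$. Since $A\cap K_x$ is closed in $K_x$, we get $x\in A$. So $A$ is closed. This argument uses no separation axiom, so it applies to our possibly non-Hausdorff $X$.

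\textbf{Step 2: conclude Hausdorffness.} By Step 1, $X$ is a $C$-Hausdorff $k$-space, so Theorem \ref{pg19} makes $X$ Hausdorff. The phrase ``therefore it is a $k$-space'' is then simply Step 1, or equivalently the classical fact that locally compact Hausdorff spaces are $k$-spaces.

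The continuity of the witness map in Theorem \ref{pg19} comes from the characterization in Remark \ref{pg18}. The witness map $f$ is continuous on each compact $K$. Given $F$ closed in the witness space $Y$, the set $f^{-1}(F)\cap K$ is closed in $K$ for every compact $K$, so $f^{-1}(F)$ is closed in $X$ by the $k$-property. Thus $f$ is a continuous bijection onto a Hausdorff space, and the preimages of disjoint open sets separating $f(x)\neq f(y)$ separate $x$ and $y$.

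\textbf{Main obstacle.} The only delicate point is Step 1 in the non-Hausdorff setting. One must not use closures inside compact sets, or closedness of compact sets, which are unavailable here. The argument above avoids both by working only with the interior of $K_x$.

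If a stronger definition is intended, such as a base of compact neighbourhoods, it implies the weak one, so the same proof works. Alternatively one could argue directly: each $\operatorname{int}K_x$ is Hausdorff because $K_x$ is, and two points are separated using a compact neighbourhood of one of them. However, this direct route gets stuck when the second point lies outside $K_x$, so I would go through Theorem \ref{pg19} instead.
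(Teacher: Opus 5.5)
Your proof is correct, but it takes a different route from the paper. You factor the theorem through Theorem~\ref{pg19}. First you show, without any separation axiom, that a space in which every point has a compact neighbourhood is a $k$-space; your Step~1 argument via $U\cap\operatorname{int}K_x$ is sound. The $k$-property then upgrades the compact-wise continuity of the witness map $f$ to global continuity, and $X$ inherits Hausdorffness from $Y$.

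The paper instead argues directly and pointwise. For $x\neq y$ it takes compact neighbourhoods $K_1$ of $x$ and $K_2$ of $y$, and sets $K=K_1\cup K_2$, which is still compact, so $f|_K$ is a homeomorphism onto $f(K)$. It pulls back disjoint open sets $V_1\ni f(x)$, $V_2\ni f(y)$ of $Y$ to relatively open subsets of $K$. Intersecting these with $\operatorname{int}_X K_1$ and $\operatorname{int}_X K_2$ gives disjoint open sets of $X$. The $k$-space conclusion is then quoted from the classical Hausdorff case.

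This is exactly the fix for the obstacle you raise at the end. The direct route does not get stuck if you take compact neighbourhoods of \emph{both} points and their union, because $f$ is then continuous on a set that is a neighbourhood of each of them.

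As for what each approach buys:
\begin{itemize}
\item Your version exhibits the theorem as a corollary of Theorem~\ref{pg19}. It also shows that the $k$-property comes for free, before and independently of Hausdorffness.
\item The paper's version is self-contained and local. It shows that in a $C$-Hausdorff space any two points that each have a compact neighbourhood can be separated by open sets, without needing local compactness of the whole space.
\end{itemize}
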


\begin{proof}
Let $X$ be locally compact and  $C$-Hausdorff space and $Y$ be a $T_{2}$ space with  $f:X\to Y$ the witness map. 
Let $x\neq y$ in $X$. Then $f(x)\neq f(y)$ in $Y$. 
As $X$ is locally compact, there exist compact sets $K_{1}$ and $K_{2}$ in $X$ such that $x \in \mbox{int}_{X}K_{1}\subset K_{1} $ and $y\in \mbox{int}_{X}K_{2} \subset K_{2} $. Let $K= K_{1}\cup K_{2}$. Then $K$ is compact and $f|_{K}: K\rightarrow f(K)$ is a homeomorphism. Let $V_{1}$ and $V_{2}$ be two open sets in $Y$ such that $f(x) \in V_{1}$ and $f(y) \in V_{2}$ and $V_{1}\cap V_{2}=\emptyset$. Then $(V_{1}\cap f(K))$ and $(V_{2}\cap f(K))$ are open in $f(K)$. Hence $f^{-1}(V_{1}\cap f(K))$ and $f^{-1}(V_{2}\cap f(K))$ are open in $K$. Also $x\in f^{-1}(V_{1}\cap f(K))$, $y\in f^{-1}(V_{2}\cap f(K))$ and $f^{-1}(V_{2}\cap f(K))\cap f^{-1}(V_{2}\cap f(K)) =\emptyset$ as $V_{1} \cap V_{2} =\emptyset$. Let $W_{1}= \mbox{int}_X K_{1}\cap f^{-1}(V_{1}\cap f(K))$, $W_{2}= \mbox{int}_X K_{2}\cap f^{-1}(V_{2}\cap f(K))$.
Then $W_{1}$ and $W_{2}$ are open in $\mbox{int}_{X}K_{1} $ and $\mbox{int}_{X}K_{2}$ respectively. Hence $W_{1}$ and $W_{2}$ are open in $X$. Also $x\in W_{1}$, $y \in W_{2}$ and moreover $W_{1}\cap W_{2}=\emptyset$. Hence $X$ is $T_{2}$.

The last one follows trivially from the well known fact that every locally compact Hausdorff space is a $k$-space. 
\end{proof}

However $k$-space or locally compact in the above theorems are not necessary. The same conclusion  follows in case of Fr\'{e}chet space. Recall a space to be Fr\'{e}chet if for every subset $B \subseteq X$ and for every $x \in  \operatorname{cl}_X B$ there is a sequence $\{b_{n}\}$ of points of $B$ such that $b_{n} \to x$. Then from corollary \ref{pg11}, we obtain the following result:,

\begin{theorem}\label{pg22}
If $X$ is Fr\'{e}chet and $C$-Hausdorff, then $X$ is Hausdorff.  In particular, every first countable $C$-Hausdorff space is Hausdorff.  
\end{theorem}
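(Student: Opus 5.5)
The plan is to derive the statement from Corollary \ref{pg11} with $Q$ = compact and $P$ = Hausdorff; the first countable case then follows separately.

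\textbf{Compactness is compact-type.} Every compact space satisfies $Q$ trivially, so this hypothesis of Corollary \ref{pg11} holds.

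\textbf{Hausdorffness is inverse invariant under continuous injections.} Suppose $g:X\to Y$ is a continuous injection and $Y$ is Hausdorff. Take $x\neq y$ in $X$. Then $g(x)\neq g(y)$, so there are disjoint open sets $V_1\ni g(x)$ and $V_2\ni g(y)$ in $Y$. The preimages $g^{-1}(V_1)$ and $g^{-1}(V_2)$ are open by continuity. They are disjoint because $V_1\cap V_2=\emptyset$. They separate $x$ and $y$, so $X$ is Hausdorff.

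\textbf{Applying the corollary.} Theorem \ref{pg10} shows that the witness map $f:X\to Y$ of a Fr\'{e}chet $C$-Hausdorff space is continuous. In the proof, the set $\{x_n\}\cup\{p\}$ is compact because every neighbourhood of $p$ contains a tail of the sequence. Here $f$ is a bijection, hence an injection, onto a Hausdorff space. The previous step therefore makes $X$ Hausdorff. If one prefers to avoid Theorem \ref{pg10}, the same conclusion follows directly from Remark \ref{pg18}: $f$ is continuous on compact sets, convergent sequences together with their limits are compact, and the Fr\'{e}chet property turns sequential continuity into continuity.

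\textbf{The first countable case.} Every first countable space is Fr\'{e}chet. Given $x\in \operatorname{cl}_X B$, choose a decreasing countable local base $U_1\supseteq U_2\supseteq\cdots$ at $x$ and pick $b_n\in U_n\cap B$. Then $b_n\to x$, and the first part of the theorem applies.

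There is no real obstacle here. The only point needing care is to check that the hypotheses of Corollary \ref{pg11} are met, and in particular that the injectivity of the witness map is what transfers Hausdorffness back from $Y$ to $X$.
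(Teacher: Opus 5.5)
Your proposal is correct and takes the same route as the paper, which obtains this theorem directly from Corollary~\ref{pg11} (continuity of the witness map via Theorem~\ref{pg10}) with $Q$ = compact and $P$ = Hausdorff. You also spell out two points the paper leaves implicit: Hausdorffness pulls back along continuous injections, and first countable spaces are Fr\'{e}chet.
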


Although realcompact spaces were originally defined within the class of Tychonoff spaces, Mandelker [11, p. 79] generalized the concept to non-Tychonoff spaces in 1971.  A family of subsets of $X$ is called stable, if every real-valued continuous function is bounded on some member of the family.  A space $X$ is called realcompact if any stable family of closed sets with finite intersection property has non-empty intersection. In the same paper, it was proved that realcompactness is also closed hereditary [\cite{mm71}Theorem 5.2(c)]. Hence we have the following theorem. 
\begin{theorem}\label{pg23}
Let $X$ be a $C$-Hausdorff topological space and real compact. Then  $X$ is $Ps$-Hausdorff topological space.
\end{theorem}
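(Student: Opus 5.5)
The plan is to apply Corollary~\ref{pg14}, which is the special case of Theorem~\ref{pg4} with $Q$ = compact. Since $X$ is already $C$-Hausdorff, it suffices to show that every pseudocompact subset $A$ of $X$ is contained in a compact subset of $X$. The natural candidate is $\operatorname{cl}_X A$. So the argument reduces to two facts: the closure of a pseudocompact subspace is pseudocompact, and a pseudocompact closed subspace of a realcompact space (in Mandelker's sense) is compact.

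\textbf{Step 1: $\operatorname{cl}_X A$ is pseudocompact.} Let $g$ be a real-valued continuous function on $\operatorname{cl}_X A$. Its restriction to $A$ is continuous, so $|g|\le M$ on $A$ for some $M$. The set $\{z : |g(z)|\le M\}$ is closed in $\operatorname{cl}_X A$ and contains the dense subset $A$, so it is all of $\operatorname{cl}_X A$. Hence $g$ is bounded. No separation axiom is needed here.

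\textbf{Step 2: $K=\operatorname{cl}_X A$ is realcompact.} This follows from the closed-heredity of realcompactness, [\cite{mm71} Theorem 5.2(c)], quoted just before the statement.

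\textbf{Step 3: $K$ is compact.} This is the step I expect to carry the real content, and it works directly from Mandelker's definition. Let $\mathcal{F}$ be a family of closed subsets of $K$ with the finite intersection property. Every real-valued continuous function on $K$ is bounded on all of $K$ by pseudocompactness, hence bounded on every member of $\mathcal{F}$. So $\mathcal{F}$ is a stable family. Realcompactness of $K$ then gives $\bigcap\mathcal{F}\neq\emptyset$, so $K$ is compact.

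The only subtlety I anticipate is checking that "stable" and "realcompact" are applied in the subspace $K$, with continuous functions on $K$, as Mandelker's closed-heredity result provides; no Tychonoff assumption is used. With Steps 1--3 in hand, every pseudocompact $A\subseteq X$ lies in the compact set $\operatorname{cl}_X A$, and Corollary~\ref{pg14} shows that $X$ is $Ps$-Hausdorff.
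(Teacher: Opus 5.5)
Your proposal is correct and takes essentially the same route as the paper: you show that $\operatorname{cl}_X A$ is pseudocompact, deduce it is realcompact because it is closed in $X$, conclude it is compact, and then apply Theorem~\ref{pg4} (equivalently Corollary~\ref{pg14}). The only difference is that you derive ``pseudocompact and realcompact $\Rightarrow$ compact'' directly from Mandelker's definition of stable families, whereas the paper cites Mandelker's Theorem 5.2(b) for this step.
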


\begin{proof}
Let $P$ be a pseudocompact subset of $X$ . Let $f\in C(\bar{P})$. Then $f|_{P} \in C(P)$. So  $f|_{P}$ is bounded. Therefore $f$ is bounded on $P$. This implies $f$ is bounded on $\bar{P}$. So $\bar{P}$ is pseudocompact. Since $\bar{P}$ is closed and closed subspace of a real compact space is real compact, so $\bar{P}$
is real compact. It was proved in [\cite{mm71}, Theorem 5.2(b)], that a space is compact if and only if it is pseudocompact and realcompact. So $\bar{P}$ is compact. Now using Theorem \ref{pg4}, $X$ is $Ps$-Hausdorff topological space. 
\end{proof}



\begin{theorem}\label{pg24}
Let $X$ be a normal and $CC$-Hausdorff topological space. Then $X$ is $Ps$-Hausdorff topological space.
\end{theorem}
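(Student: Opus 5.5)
The plan is to apply Corollary \ref{pg15}, which is Theorem \ref{pg4} with $Q$ countably compact and $R$ pseudocompact. It then suffices to show that every pseudocompact subset $A$ of $X$ lies inside a countably compact subset of $X$. The natural candidate is $\operatorname{cl}_X A$.

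\textbf{Step 1: the closure is pseudocompact.} Let $B=\operatorname{cl}_X A$ and let $g\in C(B)$. Then $g|_A$ is continuous on the pseudocompact space $A$, so it is bounded, say $g(A)\subseteq[-M,M]$. By continuity, $g(B)\subseteq \operatorname{cl}_{\mathbb R} g(A)\subseteq[-M,M]$. Hence $B$ is pseudocompact. This is the same argument already used in the proof of Theorem \ref{pg23}.

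\textbf{Step 2: $B$ is normal and $T_1$.} The space $B$ is a closed subspace of the normal space $X$, so $B$ is normal. By Theorem \ref{pg8}, $X$ is $T_1$: countable compactness is a finite-type property and Hausdorff spaces are $T_1$. Hence $B$ is also $T_1$.

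\textbf{Step 3: $B$ is countably compact.} This is the main step. Suppose not. Then $B$ contains a countably infinite closed discrete subset $D=\{d_n:n\in\mathbb N\}$, using the characterization of countable compactness recalled before Lemma \ref{lem2}. Define $h:D\to\mathbb R$ by $h(d_n)=n$. Since $D$ is discrete, $h$ is continuous, and $D$ is closed in the normal space $B$. By the Tietze extension theorem, $h$ extends to a continuous $\tilde h:B\to\mathbb R$, which is unbounded. This contradicts Step 1.

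The only point needing care is that Tietze (via Urysohn's lemma) requires just normality, meaning that disjoint closed sets can be separated. The $T_1$ property from Step 2 ensures that singletons, and hence discrete subsets such as $D$, behave as expected. Therefore $B$ is countably compact.

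\textbf{Conclusion.} Every pseudocompact $A\subseteq X$ is contained in the countably compact set $\operatorname{cl}_X A$. Since $X$ is $CC$-Hausdorff, Corollary \ref{pg15} gives that $X$ is $Ps$-Hausdorff.
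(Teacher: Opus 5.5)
Your proof is correct and follows the same route as the paper: $\operatorname{cl}_X A$ is pseudocompact, it is normal as a closed subspace of $X$, hence it is countably compact, and Theorem~\ref{pg4} (equivalently Corollary~\ref{pg15}) then gives the result. The only difference is that you prove the standard fact ``normal and pseudocompact $\Rightarrow$ countably compact'' via Tietze, whereas the paper simply cites it; note that the $T_1$ property you derive is actually needed to pass from ``not countably compact'' to an infinite closed discrete subset, not in the Tietze step.
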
  

\begin{proof}
Let $P$ be a pseudocompact subset of $X$. Then  $\bar{P}$ is also pseudocompact.
As $\bar{P}$ is closed in $X$ and closed subspace of a normal space is normal, so $\bar{P}$ is normal. Since in normal space pseudocompact and countably compact are equivalent, $\bar{P}$ is countably compact. Now using Theorem \ref{pg4}, $X$ is $Ps$-Hausdorff topological space.
\end{proof}

\begin{theorem}\label{pg25}
Every C-Hausdorff space is $T_1$.
\end{theorem}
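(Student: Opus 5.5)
This is a direct specialization of Theorem \ref{pg8}, with $Q$ = compact and $P$ = Hausdorff. Two hypotheses need checking. First, compactness is a finite-type property, since every finite topological space is compact. Second, the witness property $P$ implies $T_1$, since every Hausdorff space is $T_1$. Theorem \ref{pg8} then yields at once that every $C$-Hausdorff space is $T_1$.

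For completeness, and so that the argument does not depend on how Theorem \ref{pg8} phrases ``$P$ satisfies $T_1$'', the direct argument is short. Let $X$ be $C$-Hausdorff with witness space $Y$, which is Hausdorff, and witness map $f:X\to Y$. Fix distinct points $x,y\in X$ and set $K=\{x,y\}$. Then $K$ is finite, hence compact, so $f|_K:K\to f(K)=\{f(x),f(y)\}$ is a homeomorphism. Since $f$ is a bijection, $f(x)\neq f(y)$. The Hausdorff property of $Y$ then makes the two-point subspace $f(K)$ discrete.

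Pulling back along the homeomorphism, $K$ is discrete in the subspace topology from $X$. So there are open sets $U,V$ in $X$ with $U\cap K=\{x\}$ and $V\cap K=\{y\}$. Thus $U$ contains $x$ but not $y$, and $V$ contains $y$ but not $x$, which is exactly the $T_1$ separation of $x$ and $y$.

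No step is a genuine obstacle. The only point needing care is that the definition of $C$-Hausdorff must be applied to the two-point compact set $K$, rather than hoping that $f$ is globally continuous, which need not hold.
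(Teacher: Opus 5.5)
Your proposal is correct and matches the paper's approach: the paper gives no separate proof and simply notes (Remark~\ref{pg26}) that the result is the special case of Theorem~\ref{pg8} with $Q$ compact and $P$ Hausdorff. Your direct argument through the two-point compact set $K=\{x,y\}$ is the same as the paper's proof of Theorem~\ref{pg8}.
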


\begin{remark}\label{pg26} From Theorem \ref{pg8}, it follows that every $C$-Hausdorff topological space is $T_1$. Therefore from Theorem \ref{pg3}, we conclude that $CC$-Hausdorff,$Ps$-Hausdorff are also $T_1$. However, none of $CC$-Hausdorff or $Ps$-Hausdorff along with $T_1$ does not imply $C$-Hausdorff as demonstrated by Example \ref{chncch}  in Section 4. 

Recall that a space is called a $KC$-space if every compact subset of it is closed. It is well known that $T_2 \rightarrow KC \rightarrow T_1$. The following theorem determines the precise placement of$KC$-spaces between in between Hausdorff and $T_1$.

\end{remark}

\begin{theorem}\label{pg27}
Every $C$-Hausdorff space is $KC$ space. 
\end{theorem}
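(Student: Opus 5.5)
The plan is to show directly that every compact subset of a $C$-Hausdorff space is closed. The device is to adjoin a single point to the compact set, which keeps it compact, and then transport the question into the Hausdorff witness. Let $X$ be $C$-Hausdorff, $Y$ a Hausdorff witness, and $f:X\to Y$ the witness map. Fix a compact subset $K\subseteq X$ and a point $x\in X\setminus K$. It suffices to find an open set $U$ of $X$ with $x\in U$ and $U\cap K=\emptyset$.

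The key step is to consider $C=K\cup\{x\}$. This set is compact, since any open cover has one member containing $x$ and finitely many members covering $K$. So by Definition~\ref{pg1}, $f|_C:C\to f(C)=f(K)\cup\{f(x)\}$ is a homeomorphism onto a subspace of $Y$. Since $f|_K$ is continuous, $f(K)$ is compact in the Hausdorff space $Y$, hence closed in $Y$. Because $f$ is injective, $f(x)\notin f(K)$. Thus $V=Y\setminus f(K)$ is an open set of $Y$ containing $f(x)$, and $V\cap f(C)=\{f(x)\}$ is open in $f(C)$.

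Pulling back along the homeomorphism $f|_C$ shows that $\{x\}$ is open in the subspace $C$. Hence there is an open set $U$ of $X$ with $U\cap C=\{x\}$, and in particular $U\cap K=\emptyset$. Therefore $x\notin \operatorname{cl}_X K$, and since $x\in X\setminus K$ was arbitrary, $K$ is closed in $X$.

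I do not expect a serious obstacle. The only point needing care is that $f$ is not assumed continuous on all of $X$, so we cannot pull back open sets of $Y$ globally. This is exactly why the argument works inside the compact set $K\cup\{x\}$: there the homeomorphism is available and yields the relatively open singleton $\{x\}$, which then lifts to an open set of $X$.
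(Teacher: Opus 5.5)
Your proposal is correct and follows essentially the same route as the paper. Both proofs adjoin the outside point to the compact set, use that $f(K)$ is closed in the Hausdorff witness so that $\{f(x)\}$ is relatively open in $f(K)\cup\{f(x)\}$, and pull this back through the homeomorphism $f|_{K\cup\{x\}}$ to obtain an open set of $X$ that contains $x$ and misses $K$.
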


\begin{proof}

Let $A$ be a compact subset of $X$ and $X$ be a $C$-Hausdorff topological space. So there exists a Hausdorff topological space $Y$ and a witness map $f:X\to Y$ of $C$-Hausdorff. Let $p \notin A$. Then $A \cup \{p\}$ is a compact subset of $X$ and also $f|_{A \cup \{P\}}: A \cup  \{p\}  \rightarrow f(A) \cup \{f(p)\}$ is a homeomorphism. This implies $f(A)$, being compact in Y,  is closed. So $\{f(p)\}$ is open  in $f(A) \cup \{f(p)\}$. This implies that $\{p\}$ is open in $A \cup \{p\}$. Thus there exists an open set $U$ in $X$ such that $U \cap (A \cup \{p\}) = \{p\}$. Hence $U \cap A = \emptyset$, which implies $p \notin \overline{A}$. Consequently, $A$ is closed in $X$. So $X$ is $KC$. 
\end{proof}

We have therefore the hierarchy of generalized Hausdorff spaces between Hausdorff and $T_1$. This closed tree can be further enlarged for various $Q$-Hausdorff spaces where $Q$ is finite-type topological property.

\begin{figure}[H]
    \centering
    \centering
\[\begin{tikzcd}
	& {Hausdorff(T_2)} & \\
	& {Ps-Hausdorff} \\
	& {CC-Hausdorff} \\
	& {C-Hausdorff} \\
	KC \\
	{Weakly  \ Hausdorff} && {K-Hausdorff} \\
	{\Delta-Hausdorff} \\
	& {T_1}
	\arrow[from=1-2, to=2-2]
	\arrow[from=2-2, to=3-2]
	\arrow[from=3-2, to=4-2]
	\arrow[from=4-2, to=5-1]
	\arrow[from=4-2, to=6-3]
	\arrow[from=5-1, to=6-1]
	\arrow[from=6-1, to=7-1]
	\arrow[from=6-3, to=8-2]
	\arrow[from=7-1, to=8-2]
\end{tikzcd}\]
  \caption{Implications among various generalized Hausdorff spaces}
    \label{fig:hausdorff-spaces}
\end{figure}

\begin{theorem}\label{pg27.1}
A topological space $X$ is $C$-Hausdorff if and only if it is both $k$-Hausdorff and a $KC$-space.
\end{theorem}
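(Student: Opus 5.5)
The forward implication is already available from earlier results. If $X$ is $C$-Hausdorff, then every compact subspace of $X$ is homeomorphic, via the witness map, to a subspace of a Hausdorff space. Hence it is Hausdorff, that is, $X$ is $k$-Hausdorff, as noted in Remark \ref{pg18}. That $X$ is $KC$ is exactly Theorem \ref{pg27}. So the work is in the converse.

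For the converse, let $X$ be $k$-Hausdorff and $KC$. I would take as witness the set $X$ with its $k$-ification topology $k\tau$: a set $U$ is $k\tau$-open iff $U\cap K$ is open in $K$ for every $\tau$-compact $K\subseteq X$. The witness map is the identity $\mathrm{id}\colon (X,\tau)\to (X,k\tau)$. Two facts need to be checked first.
\begin{enumerate}
\item Since $k\tau\supseteq\tau$, the trace of $k\tau$ on a $\tau$-compact $K$ contains $\tau_K$.
\item Conversely, every $k\tau$-open $U$ has $U\cap K$ open in $(K,\tau_K)$ by the definition of $k\tau$.
\end{enumerate}
So $k\tau$ and $\tau$ induce the same topology on every compact $K$. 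Hence $\mathrm{id}|_K$ is a homeomorphism onto its image, and property (\ref{2.1a}) holds with $Q=$ compact.

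It then remains to show that $(X,k\tau)$ is Hausdorff, and I expect this to be the main obstacle. The $k$-Hausdorff hypothesis alone will not do it, since it separates points only inside a single compact set. The plan is to fix $x\neq y$ and build a set $U\ni x$ with $y\notin \operatorname{cl}_{k\tau}U$, using $KC$ to control how compact sets meet.

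For each compact $K$ containing both points, $K$ is Hausdorff and closed, so $x$ and $y$ have disjoint relatively open neighbourhoods in $K$. These choices must be made coherently across all compact sets. The first thing I would try is a maximal-family or Zorn's-lemma argument over compatible choices, using the following consequence of $KC$. For compact $K_1,K_2$, the union $K_1\cup K_2$ is compact and therefore Hausdorff. Moreover each $K_i$ is closed in it, so separations chosen in $K_1\cup K_2$ restrict to separations in $K_1$ and in $K_2$.

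If this coherence argument fails, the fallback is to define $Y$ by a quotient-type construction on $\bigoplus_K K$ and invoke Lemma \ref{lem1}. I would also test the statement against the standard non-Hausdorff $KC$ examples, for instance sequential $KC$ spaces with unique sequential limits. The reason is that Theorem \ref{pg19} would force $(X,k\tau)$, being a $k$-space, to be Hausdorff whenever it is $C$-Hausdorff. Such a test would either confirm the construction or reveal that an extra hypothesis is required.
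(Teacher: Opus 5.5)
Your forward direction is correct and is the same as the paper's (Remark~\ref{pg18} with Theorem~\ref{pg27}), and your two facts about $k\tau$ are right. The gap is the step you flag yourself: you never prove that $(X,k\tau)$ is Hausdorff. No coherence (Zorn) argument or quotient argument can prove it, because the converse of the theorem is false.

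Your reduction is in fact sharp. If $f\colon X\to Y$ is any witness, the initial topology $\sigma=\{f^{-1}(V): V\ \text{open in}\ Y\}$ is Hausdorff. It agrees with $\tau$ on every compact $K$, since $f|_K$ is a homeomorphism, so $\sigma\subseteq k\tau$. Together with your two facts, this gives: $X$ is $C$-Hausdorff if and only if $(X,k\tau)$ is Hausdorff. The converse as stated would therefore force every $KC$ $k$-space with Hausdorff compact subspaces to be Hausdorff.

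The paper's own argument does not avoid this. From $\tau\subseteq\tau^*$ one only gets continuity of $\mathrm{id}\colon(K,\tau^*_K)\to(K,\tau_K)$, so Lemma~\ref{lem1} is applied in the wrong direction, and $(K,\tau^*_K)$ need not be compact. For example, in $X=[0,1]$ take two sequences with disjoint ranges converging to $x$, each together with its limit. Both are $\tau^*$-open and they meet in $\{x\}$, so $\tau^*$ is discrete.

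Here is a counterexample. Let $T=2^{<\omega}$, put $b_x=\{x|_n:n\in\omega\}$ for $x\in 2^\omega$, let $S_p$ be the countable dense set of eventually zero sequences, and let $S_q=2^\omega\setminus S_p$. On $X=T\cup\{p,q\}$ make the points of $T$ isolated. The basic neighbourhoods of $p$ are the sets $\{p\}\cup(T\setminus C)$ with $C\cap b_x$ finite for every $x\in S_p$; those of $q$ are defined the same way using $S_q$.
\begin{itemize}
\item $X$ is $T_1$ and Fr\'echet. For $E\subseteq T$, $p\in\operatorname{cl}_X E$ iff $E\cap b_x$ is infinite for some $x\in S_p$, and then $E\cap b_x$ converges to $p$.
\item An infinite antichain in $T$ meets each branch at most once, so it is closed and discrete in $X$. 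By Ramsey's theorem and compactness of $2^\omega$, a subset of $T$ with no infinite antichain is almost contained in finitely many branches.
\item Hence every compact $K$ satisfies $K\cap T\subseteq^* b_{x_1}\cup\cdots\cup b_{x_k}$. The part on branches from $S_p$ converges to $p$ and is missed by a neighbourhood of $q$, and symmetrically for $q$. If $q\notin K$, compactness forces the part on $S_q$-branches to be finite. So $K$ is closed and Hausdorff, and $X$ is $KC$ and $k$-Hausdorff.
\item $p$ and $q$ cannot be separated. Disjoint basic neighbourhoods would give $E=T\setminus C$ with $b_x\subseteq^*E$ for $x\in S_p$ and $b_x\cap E$ finite for $x\in S_q$. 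Then $S_q$ would equal the $F_\sigma$ set $\{x: b_x\cap E\ \text{finite}\}$, so the countable dense set $S_p$ would be a $G_\delta$ in $2^\omega$, contradicting the Baire category theorem.
\end{itemize}
By Theorem~\ref{pg22}, $X$ is not $C$-Hausdorff. Your instinct to test sequential $KC$ spaces was the right one. The statement can only be salvaged in the form ``$X$ is $C$-Hausdorff iff $(X,k\tau)$ is Hausdorff''.
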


\begin{proof}
\textbf{Necessary part:} Follows directly from Remark \ref{pg18} and Theorem \ref{pg27}.

\medskip
\noindent
\textbf{Sufficient part:} From the given hypothesis, every compact subset is closed. Since singletons are compact, every singleton set is closed, and therefore $X$ is $T_1$.

\begin{itemize}
    \item \textbf{Case 1:} If every compact subset of $X$ is finite, then by Theorem \ref{pg13.1}, $X$ is $C$-Hausdorff.
    
    \item \textbf{Case 2:} Suppose there exists a compact set which is infinite.
    
    Let $\mathcal{F}$ be the family of all infinite compact subsets of $X$.
    
    Let $\tau$ be the original topology on $X$, and for each $F \in \mathcal{F}$, let $\tau_F$ denote the subspace topology on $F$.
    
    Let $\tau^*$ denote the topology generated by 
    \[
    \tau \cup \bigcup_{F \in \mathcal{F}} \tau_F.
    \]
    Denote $(X, \tau^*)$ by $X^*$.
    
    \medskip
    \noindent
    \textbf{Claim:} $X^*$ is Hausdorff ($T_2$).
    
    Let $x, y \in X$ with $x \ne y$, and let $K \in \mathcal{F}$ be an infinite compact set.
    \begin{itemize}
        \item If $x, y \in K$, then $x$ and $y$ are strongly separated by open sets in $\tau_K$, which are also members of $\tau^*$.
        \item If $x \notin K$, consider the set $K \cup \{y\}$. The set $K \cup \{y\}$ is compact and therefore closed in $(X, \tau)$. Thus,
        \[
        x \in X \setminus (K \cup \{y\}) \in \tau \subset \tau^*,
        \]
        and
        \[
        y \in K \cup \{y\} \in \tau_{K \cup \{y\}} \subset \tau^*.
        \]
    \end{itemize}
    Hence, $X^*$ is $T_2$.
\end{itemize}

Since $\tau \subseteq \tau^*$, the identity map $\mathrm{id} \colon X \to X^*$ restricted to any compact subset is a homeomorphism onto its image by the Lemma \ref{lem1}. Therefore, $X$ is a $C$-Hausdorff space.
\end{proof}

\begin{theorem}\label{pg30}
A space $X$ is $C$-Hausdorff if and only if there exists a Hausdorff topology on $X$ such that the identity map is continuous on every compact subset of $X$.    
\end{theorem}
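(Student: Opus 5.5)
The argument transfers the witness structure from $Y$ back onto the underlying set of $X$ and then invokes Lemma~\ref{lem1}; it is essentially a repackaging of Remark~\ref{pg18}. Write $\tau$ for the topology of $X$. By ``the identity map is continuous on every compact subset'' I read: for every $\tau$-compact $K\subseteq X$, the map $\mathrm{id}|_K:(K,\tau_K)\to(K,\sigma_K)$ is continuous, where $\sigma$ is the new Hausdorff topology on $X$. I will show that this is equivalent to $\mathrm{id}\colon(X,\tau)\to(X,\sigma)$ being a witness map for $C$-Hausdorffness.

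\emph{Necessity.} Suppose $X$ is $C$-Hausdorff, with witness map $f\colon X\to Y$ onto a Hausdorff space $(Y,\sigma_Y)$. Pull the topology back along the bijection by setting
\[
\sigma=\{f^{-1}(V): V\in\sigma_Y\}.
\]
Then $f\colon(X,\sigma)\to(Y,\sigma_Y)$ is a homeomorphism, so $(X,\sigma)$ is Hausdorff. For each $\tau$-compact $K$, the map $\mathrm{id}|_K$ factors as $(f|_K)^{-1}_\sigma\circ f|_K$. Here $f|_K:(K,\tau_K)\to(f(K),(\sigma_Y)_{f(K)})$ is a homeomorphism because $f$ is a witness map. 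The second factor is the inverse of the homeomorphism $f|_K:(K,\sigma_K)\to f(K)$, which is a restriction of the global homeomorphism. Hence $\mathrm{id}|_K:(K,\tau_K)\to(K,\sigma_K)$ is a homeomorphism, and in particular it is continuous.

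\emph{Sufficiency.} Suppose $\sigma$ is a Hausdorff topology on $X$ with $\mathrm{id}|_K$ continuous from $(K,\tau_K)$ to $(K,\sigma_K)$ for every $\tau$-compact $K$.
\begin{enumerate}
\item Fix a $\tau$-compact $K$. The map $\mathrm{id}|_K$ is a continuous bijection from the compact space $(K,\tau_K)$ onto $(K,\sigma_K)$.
\item The space $(K,\sigma_K)$ is Hausdorff, being a subspace of the Hausdorff space $(X,\sigma)$.
\item By Lemma~\ref{lem1}, $\mathrm{id}|_K$ is therefore a homeomorphism onto its image.
\end{enumerate}
Thus $\mathrm{id}\colon(X,\tau)\to(X,\sigma)$ satisfies condition (\ref{2.1a}) with $Q$ compact and $P$ Hausdorff, and so $X$ is $C$-Hausdorff. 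As an alternative route to the same conclusion, one could check the $k$-Hausdorff and $KC$ conditions directly and apply Theorem~\ref{pg27.1}. The direct witness argument above is shorter.

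\emph{Main obstacle.} There is no real difficulty; the only point needing care is bookkeeping. In the continuity condition, compactness must be taken with respect to the original topology $\tau$, and the two subspace topologies on $K$ must be kept distinct. In the necessity direction, one must check that pulling $\sigma_Y$ back along $f$ gives a genuine topology. This holds because $f^{-1}$ commutes with arbitrary unions and finite intersections, and because $f$ is a bijection.
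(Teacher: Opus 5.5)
Your proof is correct and is essentially the paper's. Your pulled-back topology $\sigma=\{f^{-1}(V): V\in\sigma_Y\}$ is exactly the initial topology induced by $f$ that the paper places on $X$, and your factorization through $f(K)$ even gives that $\mathrm{id}|_K$ is a homeomorphism, slightly more than the continuity the paper checks. Your sufficiency direction is the paper's appeal to Remark~\ref{pg18}, namely Lemma~\ref{lem1} applied to $\mathrm{id}|_K$.
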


\begin{proof}
One implication is immediate and follows directly from the Remark~\ref{pg18}. 

For the converse, suppose that $X$ is $C$-Hausdorff with witness space $Y$ and witness map $f: X \to Y$. Equip the set $X$ with the weak (initial) topology induced by $f$, and denote this topological space by $\mathcal{X}$. Since $Y$ is Hausdorff and $f: \mathcal{X} \to Y$ is an injective continuous map, it follows that $\mathcal{X}$ is Hausdorff. 

Now, consider the identity map $I_X: X \to \mathcal{X}$. Let $K$ be an arbitrary compact subset of $X$, and let $U$ be an open subset of $K$ in $\mathcal{X}$. By the definition of the subspace topology and the initial topology on $\mathcal{X}$, there exists an open set $V \subseteq Y$ such that 
\[
U = f^{-1}(V) \cap K = (f|_K)^{-1}(V).
\]
Since $f|_K : K \to Y$ is continuous, $(f|_K)^{-1}(V)$ is open in the subspace topology of $K$ in $X$, which means that $I_X^{-1}(U) = U$ is open in $K$. Therefore, the restriction $I_X|_K : K \to \mathcal{X}$ is continuous for every compact subset $K \subseteq X$.

\end{proof}

\begin{theorem}\label{pg28}
Let  $X$ be an  $\aleph_{0}$- bounded, $C$-Hausdorff space, then any closed subspace of $X$ with of cardinality $\lneq 2^c$ is pseudocompact. In particular, any $C$-Hausdorff, $\aleph_0$-bounded space of cardinality $\lneq 2^c $ is pseudocompact. 
\end{theorem}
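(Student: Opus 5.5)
The plan is to argue directly from the definition of pseudocompactness, in the general (not necessarily Tychonoff) sense used in the paper: every real-valued continuous function is bounded. I expect the cardinality bound $\lneq 2^c$, and even the $C$-Hausdorff hypothesis, not to be needed for this conclusion. They would only matter for a finer statement, for instance via Theorem \ref{pg27} together with a size estimate for closures of countable discrete sets in a Hausdorff witness space $Y$.

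First, closed subspaces inherit $\aleph_0$-boundedness. Let $F\subseteq X$ be closed and let $D\subseteq F$ be countable. By hypothesis $\operatorname{cl}_X D$ is compact. Since $F$ is closed, $\operatorname{cl}_X D\subseteq F$, and therefore $\operatorname{cl}_F D=\operatorname{cl}_X D$. So every countable subset of $F$ has compact closure in $F$.

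Second, suppose $g\colon F\to\mathbb{R}$ is continuous and unbounded. Choose points $x_n\in F$ with $|g(x_n)|>n$ for each $n\in\mathbb{N}$, and set $D=\{x_n:n\in\mathbb{N}\}$. By the first step $K=\operatorname{cl}_F D$ is compact. Then $g|_K$ is a continuous real function on a compact space, so it is bounded. This contradicts $|g(x_n)|>n$ for all $n$, since every $x_n$ lies in $K$. Hence $F$ is pseudocompact. Taking $F=X$ gives the ``in particular'' clause.

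I do not see a genuine obstacle. The only point needing care is that pseudocompactness is meant in Mandelker's non-Tychonoff sense, as used before Theorem \ref{pg23}, and that boundedness of continuous functions on compact sets holds with no separation axiom. If the intended statement were instead about compactness, or about countable compactness of $F$, the plan would change. I would then use Theorem \ref{pg27} ($X$ is $KC$, so $\operatorname{cl}_X D$ is closed and compact) and transfer to the Hausdorff witness $Y$. There, the fact that an infinite compact Hausdorff space containing a countable discrete set has closure of size $2^c$ would supply the cardinality contradiction, and that transfer step would be the main obstacle.
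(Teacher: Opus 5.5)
Your proof is correct, but it takes a different and more elementary route than the paper. The paper argues by contradiction. If a closed $E$ with $|E|<2^{c}$ were not pseudocompact, it would contain a $C$-embedded copy of $\mathbb{N}$. Since $E$ inherits $\aleph_0$-boundedness (your first step), $\operatorname{cl}_E\mathbb{N}$ is compact. The $C$-Hausdorff hypothesis, through Theorem \ref{pg27.1}, makes this closure Hausdorff. $C^*$-embedding then identifies it with $\beta\mathbb{N}$, whose cardinality $2^{c}$ contradicts $|E|<2^{c}$.

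You instead stop as soon as compactness appears: a continuous $g$ that is unbounded on a countable set $D$ cannot be bounded on the compact set $\operatorname{cl}_F D\supseteq D$. The same shortcut is already hidden in the paper's argument. The $C$-embedding gives a continuous extension of $n\mapsto n$ to $E$, and this extension would have to be bounded on the compact set $\operatorname{cl}_E\mathbb{N}$. So the identification with $\beta\mathbb{N}$ and the cardinality count are never actually needed.

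Your route gives the hypothesis-free fact that every $\aleph_0$-bounded space, and every closed subspace of one, is pseudocompact (indeed countably compact, by the same reasoning). It needs no separation axiom and no cardinality bound. It also avoids both the existence of a $C$-embedded copy of $\mathbb{N}$ in a non-pseudocompact space and the characterization of $\beta\mathbb{N}$. The paper's route gains nothing extra here, and it makes the $C$-Hausdorff and $|E|<2^{c}$ hypotheses look essential when they are not.

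One caution about your closing aside, which plays no role in the proof. The closure of a countable discrete set in a compact Hausdorff space need not have size $2^{c}$ unless that set is $C^*$-embedded; consider $\mathbb{N}$ inside its one-point compactification. A compactness version of the theorem would also be false. The ordinal space $[0,\omega_1)$ is Hausdorff, $\aleph_0$-bounded, of cardinality $\aleph_1<2^{c}$, and not compact.
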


\begin{proof}
Let $E$ be a closed subspace of $X$ of cardinality $\lneq 2^c$. Suppose $E$ is not pseudocompact, then $E$ contains a copy of $\mathbb{N}$, $C$-embedded in $E$. As $X$ is $\aleph_0$-bounded and $\aleph_0$-boundedness is closed hereditary property, $E$ is also $\aleph_0$-bounded. So, $cl_{E} \mathbb{N}$ is compact and hence it is Hausdorff by Theorem \ref{pg27.1}. Since $\mathbb{N}$ is $C^*$ embedded in $cl_{E} \mathbb{N}$. Therefore, $cl_{E} \mathbb{N}= \beta \mathbb{N}$. So, cardinality of $E \geq 2^c$ as cardinality of $\beta \mathbb{N} = 2^c$, a
contradiction.
\end{proof}

\begin{corollary}\label{pg29}
Every  paracompact, $C$-Hausdorff, k-space is normal and hence Tychonoff.
\end{corollary}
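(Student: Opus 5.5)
By Theorem \ref{pg19}, every $C$-Hausdorff $k$-space is Hausdorff. So a paracompact, $C$-Hausdorff $k$-space $X$ is a paracompact Hausdorff space. The plan is therefore to reduce the corollary to the classical theorem of Dieudonn\'{e} that every paracompact Hausdorff space is normal. The order of steps is as follows.

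First, we would invoke Theorem \ref{pg19} to conclude that $X$ is $T_2$. Concretely, the $k$-space property together with Remark \ref{pg18} makes the witness map $f:X\to Y$ continuous, because it is continuous on each compact subset. An injective continuous map into a Hausdorff space forces $X$ to be Hausdorff.

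Second, we would apply Dieudonn\'{e}'s theorem. For completeness we would sketch it in two stages.
\begin{enumerate}
\item \emph{Regularity.} Fix a closed set $F$ and a point $x\notin F$. For each $y\in F$, Hausdorffness gives an open $U_y\ni y$ with $x\notin\overline{U_y}$. The sets $U_y$ together with $X\setminus F$ form an open cover of $X$. Take a locally finite open refinement and let $V$ be the union of its members that meet $F$. Local finiteness gives $\overline{V}=\bigcup\overline{W}$ over those members $W$. Each such $\overline{W}$ lies in some $\overline{U_y}$, so $x\notin\overline{V}$. Hence $V$ and $X\setminus\overline{V}$ separate $F$ and $x$.
\item \emph{Normality.} Repeat the same argument with two disjoint closed sets, using regularity in place of Hausdorffness.
\end{enumerate}

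Finally, the claim \emph{``hence Tychonoff''} follows because $X$ is normal and $T_1$. The $T_1$ property comes from Theorem \ref{pg25}, or directly from $T_2$. Urysohn's lemma then gives complete regularity, so $X$ is $T_{3\frac12}$.

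There is no real obstacle once Theorem \ref{pg19} is available, since all the work lies in the classical Dieudonn\'{e} argument. The only point requiring care is the closure identity $\overline{\bigcup W}=\bigcup\overline{W}$ for a locally finite family, which is what makes $x\notin\overline{V}$ in the regularity step.
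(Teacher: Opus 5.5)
Your proposal is correct and follows the paper's own argument. Theorem~\ref{pg19} makes $X$ Hausdorff, the classical Dieudonn\'{e} theorem (paracompact Hausdorff implies normal) gives normality, and normal plus $T_1$ gives Tychonoff. Your sketch of Dieudonn\'{e}'s argument and the Urysohn step are accurate, though they go beyond the paper, which simply cites the paracompact-Hausdorff fact.
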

\begin{proof}
As every $C$-Hausdorff, $k$-space is Hausdorff and paracompact Hausdorff space is normal. 
\end{proof}

Recall that a space $X$ is collectionwise normal if if it is a $T_1$ and, for every discrete family of closed subsets $\{F_i\}_{i \in I}$ of $X$, there exists a pairwise disjoint family of open subsets $\{U_i\}_{i \in I}$ in $X$ such that $F_i \subseteq U_i$ for each $i \in I$. It is well known that every collectionwise normal space is  normal. However the converse holds if the space $X$ is countably compact.

\begin{theorem}
 Let $X$ be a $C$-Hausdorff and $\mathcal{N}_{0}$-bounded. Then every countable subset of $X$ is collectionwise normal.   
\end{theorem}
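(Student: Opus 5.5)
The plan is to reduce everything to compact Hausdorff sets, where the classical theory applies. Let $A$ be a countable subset of $X$. Since $X$ is $\aleph_0$-bounded, $K=\operatorname{cl}_X A$ is compact. As $X$ is $C$-Hausdorff, the witness map $f$ restricts to a homeomorphism $f|_K:K\to f(K)$, and $f(K)$ is a subspace of the Hausdorff witness $Y$. Hence $K$ is Hausdorff; this is exactly the observation of Remark~\ref{pg18} that $C$-Hausdorff spaces are $k$-Hausdorff. So $K$ is compact Hausdorff, and in particular normal and Tychonoff.

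Next, $A$ is a subspace of $K$, so $A$ is Tychonoff, in particular regular and Hausdorff. Since $A$ is countable, it is Lindel\"of. A regular Lindel\"of space is paracompact, so $A$ is a paracompact Hausdorff space. By the classical theorem (Engelking, 5.1.18), every paracompact Hausdorff space is collectionwise normal. This gives the conclusion for $A$.

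The one point needing care is that collectionwise normality is not hereditary. It therefore does not suffice to note that the compact Hausdorff set $K$ (or, by the remark preceding the statement, any countably compact normal space) is collectionwise normal; the property must be established for $A$ itself. That is why the argument passes through ``countable $+$ regular $\Rightarrow$ Lindel\"of regular $\Rightarrow$ paracompact''.

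As an alternative for this step, one can argue directly. A discrete family $\{F_i\}$ of closed subsets of the Lindel\"of space $A$ has only countably many nonempty members, since each point has a neighbourhood meeting at most one $F_i$ and countably many such neighbourhoods cover $A$. The countable case then follows from normality of $A$ by the usual inductive shrinking used to separate countably many discretely placed closed sets. Either route works, and I would present the paracompactness route as the main line.
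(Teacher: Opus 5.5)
Your proof is correct. Its first step is the same as the paper's: $\aleph_0$-boundedness makes $\operatorname{cl}_X A$ compact, and $C$-Hausdorffness makes it Hausdorff. The two arguments differ in how they pass from the closure down to $A$.

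The paper works with the closure itself. $\operatorname{cl}_X A$ is compact Hausdorff, hence normal and countably compact, hence collectionwise normal. Since $X$ is $T_1$, the countable set $A$ is an $F_\sigma$-subset of $\operatorname{cl}_X A$. The paper then uses the fact that collectionwise normality is hereditary to $F_\sigma$-subsets.

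You transfer only regularity, which is hereditary, from the compact Hausdorff closure to $A$, and let countability do the rest: countable $\Rightarrow$ Lindel\"of, and regular Lindel\"of $\Rightarrow$ paracompact $\Rightarrow$ collectionwise normal. This buys several things:
\begin{itemize}
\item It avoids the less commonly cited $F_\sigma$-heredity theorem.
\item It gives the stronger conclusion that $A$ is paracompact.
\item It shows the hypotheses are used only to make $A$ regular.
\item It applies to every Lindel\"of subspace of a compact subset of $X$. This covers everything the paper's argument reaches, because an $F_\sigma$-subset of a compact space is $\sigma$-compact.
\end{itemize}
The paper's version is shorter, provided one grants the $F_\sigma$-heredity of collectionwise normality.

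Your direct alternative is also sound: a discrete family in a Lindel\"of space has only countably many nonempty members, and normality then separates them by inductive shrinking.
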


\begin{proof}

Let $B$ be a countable subset of $X$. As $X$ is $\mathcal{N}_{0}$ bounded, so $\bar{B}$ is compact. In $C$-Hausdorff space, as every compact subspace is normal, $cl_{X}B$ is normal. Hence, collectionwise normal as normal, countably compact implies collectionwise normal. Since $X$ is $C$-Hausdorff, it is $T_{1}$. Hence, the countable subset $B$ is $F_{\sigma}$ subset of $cl_{X}B$ . As collection wise normal is $F_{\sigma}$ hereditary, $B$ is also collectionwise normal.    
\end{proof}

\begin{definition}\label{pg31}
    We call a filter in $X$ compact-supported ($CS$-filter, in short) if there exists a non-empty compact subset of $X$ which intersects with every member of the filter. We call the compact subset, a compact support of the filter.
\end{definition}

As for example, every atomic filter with compact atom is $CS$-filter. On the other hand, take any space having atleast one infinite compact subspace, e.g the real line $\mathbb{R}$. Then the cofinite filter on $\mathbb{R}$ is a non-atomic $CS$-filter. Finally the space, where every compact set is finite, the $CS$-filter and the atomic filter with compact atoms are same.

\begin{theorem}\label{pg32}
    A space $X$ is $C$-Hausdorff if and only if any compact-supported filter converges to atmost one point. 
\end{theorem}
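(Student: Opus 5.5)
We argue through the intrinsic characterization in Theorem \ref{pg27.1}: $X$ is $C$-Hausdorff if and only if it is $k$-Hausdorff and $KC$. Both directions of the present statement then reduce to filter arguments on compact sets, with no need to construct a witness space.

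\textbf{Necessity.} Let $X$ be $C$-Hausdorff. Suppose a filter $\mathcal{F}$ with compact support $K$ converges to two points $x\neq y$. Put $K'=K\cup\{x,y\}$, which is compact.
\begin{enumerate}
\item By Remark \ref{pg18}, $K'$ is Hausdorff as a subspace of $X$.
\item Consider the trace $\mathcal{F}|_{K'}=\{F\cap K' : F\in\mathcal{F}\}$. Each member is non-empty, because $F\cap K\neq\emptyset$ for every $F\in\mathcal{F}$ and $K\subseteq K'$. Hence $\mathcal{F}|_{K'}$ is a filter on $K'$.
\item A basic neighbourhood of $x$ in $K'$ has the form $U\cap K'$ with $U$ a neighbourhood of $x$ in $X$. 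Since $U\in\mathcal{F}$, we get $U\cap K'\in\mathcal{F}|_{K'}$, so the trace converges to $x$ in $K'$.
\item The same argument shows the trace converges to $y$ in $K'$.
\end{enumerate}
This contradicts the Hausdorffness of $K'$.

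\textbf{Sufficiency.} Assume every $CS$-filter has at most one limit. By Theorem \ref{pg27.1} it suffices to verify two properties.

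\emph{$k$-Hausdorff.} Let $K$ be compact and $x\neq y$ in $K$, and suppose they cannot be separated in $K$. Then every set $U\cap V\cap K$, with $U$ a neighbourhood of $x$ and $V$ a neighbourhood of $y$, is non-empty. These sets form a filter base. The filter it generates has compact support $K$ and converges to both $x$ and $y$, which is a contradiction.

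\emph{$KC$.} This is the step I expect to need the most care. Let $A$ be compact and suppose $p\in\overline{A}\setminus A$.
\begin{enumerate}
\item The sets $U\cap A$, with $U$ a neighbourhood of $p$, are non-empty and form a filter base.
\item Extend this base to an ultrafilter $\mathcal{G}$. Then $A\in\mathcal{G}$, so every member of $\mathcal{G}$ meets $A$; thus $\mathcal{G}$ is a $CS$-filter with support $A$.
\item Every neighbourhood of $p$ contains some $U\cap A$, so it lies in $\mathcal{G}$. Hence $\mathcal{G}\to p$.
\item Since $A$ is compact and $A\in\mathcal{G}$, the trace of $\mathcal{G}$ on $A$ is an ultrafilter on $A$. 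It therefore converges in $A$ to some $q\in A$, and then $\mathcal{G}\to q$ in $X$.
\item Because $p\notin A$, we have $q\neq p$.
\end{enumerate}
So $\mathcal{G}$ is a $CS$-filter with two limits, a contradiction. Hence $A$ is closed.

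The point needing care is that a mere cluster point would not suffice; passing to an ultrafilter is what upgrades the compactness cluster point to a genuine second limit. With both properties established, Theorem \ref{pg27.1} gives that $X$ is $C$-Hausdorff.
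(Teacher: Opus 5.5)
There is a genuine gap, and it is in the last step: the appeal to the ``if'' half of Theorem~\ref{pg27.1}. The paper's own proof rests on the same step (``if $X$ is not $C$-Hausdorff, some compact $K$ is not closed or not Hausdorff''), and that half is false.

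Your individual steps are sound. Your necessity argument is leaner than the paper's: the paper uses $KC$ to force $x,y\in K$ and then separates inside $K$, whereas you adjoin $x,y$ to the support and need only that $K\cup\{x,y\}$ is Hausdorff. Your derivations of $k$-Hausdorffness and $KC$ from the filter condition are also correct. The ultrafilter step is unnecessary, though: once compact sets are Hausdorff, for $p\in\overline{A}\setminus A$ the set $A$ is closed in the compact Hausdorff set $A\cup\{p\}$, exactly as in Theorem~\ref{pg27}.

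Your two halves together show that the $CS$-filter condition is equivalent to $k$-Hausdorffness alone. So the statement amounts to ``$k$-Hausdorff implies $C$-Hausdorff'', which fails. The paper's proof of Theorem~\ref{pg27.1} reverses a continuity. The inclusion $\tau\subseteq\tau^*$ makes $\mathrm{id}\colon X^*\to X$ continuous, but Lemma~\ref{lem1} needs $\mathrm{id}\colon K\to(K,\tau^*|_K)$ continuous. In fact every infinite compact set is $\tau^*$-open, so for $X=[0,1]$ the topology $\tau^*$ is discrete: each point is the intersection of two convergent sequences together with their limit.

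\textbf{Counterexample.} Let $T=2^{<\omega}$, put $\hat z=\{z|_n:n\in\omega\}$ for $z\in 2^\omega$, and fix a Bernstein set $P\subseteq 2^\omega$ with complement $Q$. On $X=T\cup\{a,b\}$ call $U$ open iff ($a\in U$ implies $\hat z\subseteq^* U$ for all $z\in P$) and ($b\in U$ implies $\hat z\subseteq^* U$ for all $z\in Q$). Here $\subseteq^*$ means inclusion up to a finite set. This is a $T_1$ topology in which $T$ is discrete.

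\emph{$X$ is $k$-Hausdorff.} If $K$ is compact, no infinite $R\subseteq K\cap T$ meets every branch finitely; otherwise $X\setminus R$ and the singletons of $R$ cover $K$ with no finite subcover. This forces $K\cap T\subseteq^*\hat z_1\cup\dots\cup\hat z_n$ for finitely many $z_i$. Indeed, if infinitely many branches met $K\cap T$ infinitely, points chosen far out along a convergent sequence of them would form such an $R$. With $E=K\cap T\cap\bigcup_{z_i\in P}\hat z_i$, the open sets $\{a\}\cup(T\setminus K)\cup E$ and $\{b\}\cup(T\setminus K)\cup((K\cap T)\setminus E)$ separate $a$ and $b$ inside $K$. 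So every compact subset is Hausdorff, and by your own necessity argument every $CS$-filter on $X$ has at most one limit.

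\emph{$X$ is not $C$-Hausdorff.} Suppose $f\colon X\to Y$ were a witness, and let $W_a,W_b$ be disjoint neighbourhoods of $f(a),f(b)$. Continuity of $f$ on the compact sets $\{a\}\cup\hat z$ ($z\in P$) and $\{b\}\cup\hat z$ ($z\in Q$) makes $C=f^{-1}(W_a)\cap T$ almost contain every $P$-branch and meet every $Q$-branch finitely. Then $\{z:\hat z\subseteq^* C\}$ is an $F_\sigma$ set containing $P$ and missing $Q$. Its complement is therefore an uncountable $G_\delta$ disjoint from $P$, so it contains a perfect set missing $P$, which is impossible for a Bernstein set.

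Hence only the ``only if'' direction of the statement is provable, and no repair of your last step can give the converse.
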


\begin{proof}
    Suppose $X$ is $C$-Hausdorff. Let $\mathcal{F}$ be a $CS$- filter on $X$ which converges to $x$ and $y$, with $x \neq y$. Let $K$ be a non-empty compact support of the filter. It is immediate that both $x$ and $y$ are in $cl_X K$. Since $X$ is $C$-Hausdorff, it is $KC$. Therefore, both $x,y$ are in $K$. Again $X$  is $K$-Hausdorff, there exists two open sets $U$ and $V$ in $X$ with $x \in U$ and $y\in V$ with $U\cap K$ and $V\cap K$ disjoint. Now both $U$ and $V$ are members of the filter, so is their intersection but $(U\cap V)\cap K = (U\cap K)\cap (V\cap K) = \emptyset$, which is a contradiction.  Therefore, every $CS$-filter must converge to atmost one point.

    Conversely, assume that every $CS$-filter converges to atmost one point. Suppose $X$ is not $C$-Hausdorff. Then there exists a compact subset $K$ in $X$ that is either not closed or not Hausdorff.

    \textbf{$K$ is not closed: } Since $K$ is not closed, $cl_X K$ is not Hausdorff. There exists two distinct points $x$ and $y$ in $cl_X K$ which are not strongly separated by open sets in $cl_X K$. Then for any two open neighborhoods $U$ and $V$ of $x$ and $y$ respectively in $X$, $U\cap V\cap K \neq \emptyset$. It can be easily verified that the family $\{U\cap V: U , V \ \mbox{are open neighborhoods of } x \ \mbox{and} \ y \ \mbox{respectively}\}$ is a base for some filter $\mathcal{G}$ in $X$. By construction, it is clear that $\mathcal{G}$ is $CS$-filter and it converges to both $x$ and $y$, which is a contradiction. 

    \textbf{$K$ is not Hausdorff:} Since $K$ is not Hausdorff, there exists a filter $\mathcal{F}$ in $K$ that converges to two distinct points $x$ and $y$ in $K$. Now, the filter $\mathcal{F}$ can be treated as a base for some filter $\mathcal{G}$ in $X$. By similar argument as before, $\mathcal{G}$ is a $CS$-filter that converges to both $x$ and $y$ which is again a contradiction. 

    Therefore, $X$ is $C$-Hausdorff. 
 \end{proof} 
    
As a particular case of the above theorem \ref{pg9}, we directly state below the following corollaries without proof.

\begin{corollary}\label{cnch}
Let $X$ be a compact non-Hausdorff topological space. Then $X$ is not $C$-Hausdorff topological space. 
\end{corollary}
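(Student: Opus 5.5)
This is the special case $Q=$ compact, $P=$ Hausdorff of Theorem \ref{pg9}, and I will argue exactly as there. Suppose, for contradiction, that $X$ is compact, not Hausdorff, and still $C$-Hausdorff. Let $Y$ be a Hausdorff witness space and $f:X\to Y$ the witness map of Definition \ref{pg1}.

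The key step is to test the witness property on the set $C=X$ itself. Since $X$ is compact, $C$ satisfies $Q$. Hence $f|_X=f:X\to f(X)=Y$ is a homeomorphism, using that $f$ is a bijection. Because $Y$ is Hausdorff and Hausdorffness is a topological property, $X$ would then be Hausdorff, which contradicts the hypothesis.

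As an alternative route I could cite Remark \ref{pg18}: every compact subspace of a $C$-Hausdorff space is Hausdorff, and applying this to the compact subspace $X$ gives the same contradiction. Another option is Theorem \ref{pg27.1}, since $k$-Hausdorffness applied to $K=X$ forces $X$ to be Hausdorff.

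I do not expect a real obstacle. The only point to check is that the witness condition may legitimately be applied with $C$ equal to the whole space. It may, because Definition \ref{pg1} quantifies over every subset of $X$ having property $Q$, and $X$ is such a subset.
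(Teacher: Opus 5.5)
Your proof is correct and follows the paper's route exactly. The paper states this corollary as the special case $Q=$ compact, $P=$ Hausdorff of Theorem~\ref{pg9}, whose proof applies the witness condition to the whole space, so that the witness map becomes a homeomorphism onto the Hausdorff witness space.
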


\begin{corollary}\label{pg39}
Let $X$ be a pseudocompact non-Hausdorff topological space. Then $X$ is not $Ps$-Hausdorff topological space. 
\end{corollary}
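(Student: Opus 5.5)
The plan is to obtain Corollary~\ref{pg39} as the instance of Theorem~\ref{pg9} with $Q$ = pseudocompact and $P$ = Hausdorff. The argument is by contradiction. Suppose $X$ is pseudocompact, not Hausdorff, and yet $Ps$-Hausdorff. Then there is a Hausdorff space $Y$ and a witness map $f:X\to Y$ such that $f|_C : C\to f(C)$ is a homeomorphism for every pseudocompact subspace $C\subseteq X$.

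The key step is to take $C=X$. This is legitimate because $X$ is pseudocompact by hypothesis, and pseudocompactness of $X$ as a subspace of itself is the same as pseudocompactness of $X$. Hence $f=f|_X : X\to f(X)=Y$ is a homeomorphism, using that $f$ is a bijection onto $Y$. Being Hausdorff is a topological property, so $X$ is Hausdorff, which contradicts the hypothesis. Therefore $X$ cannot be $Ps$-Hausdorff.

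There is no real obstacle here. The only point to check is that the definition of a witness map in Definition~\ref{pg1} quantifies over all subsets with property $Q$, including $X$ itself, and that bijectivity gives $f(X)=Y$; both are immediate. For comparison, the analogous Corollary~\ref{cnch} follows in exactly the same way with $Q$ = compact.
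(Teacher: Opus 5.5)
Your proposal is correct and follows the paper's route: the paper states this corollary without proof as the special case $Q$ = pseudocompact, $P$ = Hausdorff of Theorem~\ref{pg9}. That theorem's proof is exactly your argument: take $C = X$ in the witness-map condition, so the witness map is a homeomorphism onto a Hausdorff space.
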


\begin{corollary}
Let $(X,\tau)$ be a countably compact non-Hausdorff topological space. Then $(X,\tau)$ is not $CC$-Hausdorff topological space.

\end{corollary}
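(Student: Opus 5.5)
This corollary is the instance of Theorem \ref{pg9} with $Q$ the property of being countably compact and $P$ the Hausdorff property. The plan is to check that the theorem's hypotheses hold here and then argue by contradiction. That keeps the argument self-contained and makes clear where each property is used.

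Suppose, to the contrary, that $(X,\tau)$ is countably compact, not Hausdorff, and $CC$-Hausdorff. By Definition \ref{pg1} there is a Hausdorff space $(Y,\sigma)$ and a witness map $f:X\to Y$. The restriction $f|_C$ is a homeomorphism onto $f(C)$ for every countably compact subspace $C\subseteq X$.

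The key step is to take $C=X$ itself. This is admissible because $X$ is countably compact as a subspace of itself. Then $f=f|_X:X\to f(X)=Y$ is a homeomorphism, using that $f$ is a bijection and so $f(X)=Y$. Being Hausdorff is a topological property, so $X$ is Hausdorff, which contradicts the hypothesis. Hence $X$ is not $CC$-Hausdorff.

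There is no real obstacle here. The only point needing care is that the whole space qualifies as a subspace with property $Q$, and surjectivity of the witness map then makes the restriction to $X$ a homeomorphism onto all of $Y$. Alternatively, one can argue via Theorem \ref{pg3}: $CC$-Hausdorff implies $C$-Hausdorff, and $C$-Hausdorff implies $KC$ by Theorem \ref{pg27}. That route does not immediately give a contradiction for non-compact $X$, so the direct application of Theorem \ref{pg9} is the one to use.
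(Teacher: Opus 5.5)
Your proposal is correct and takes the same approach as the paper. The paper states this corollary without proof as the special case of Theorem~\ref{pg9} with $Q$ = countably compact and $P$ = Hausdorff, and your argument is exactly that theorem's proof: take $C = X$, so the bijective witness map is a homeomorphism onto the Hausdorff witness space.
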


From Theorem \ref{pg7}, it follows that,
\begin{theorem}
 $C$-Hausdorff, $Ps$-Hausdorff, $CC$-Hausdorff are productive
\end{theorem}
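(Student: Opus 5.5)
The plan is to obtain the statement as a direct specialization of Theorem \ref{pg7}. In each of the three cases we take $P$ to be Hausdorffness and $Q$ to be, respectively, compactness, countable compactness, or pseudocompactness. Theorem \ref{pg7} then needs two hypotheses: $P$ must be productive, and $Q$ must be projection-dominated in the sense of Definition \ref{pg6}. The productivity of Hausdorffness is classical. Given distinct points $(x_\alpha)\neq(y_\alpha)$ of $\prod_\alpha Y_\alpha$, choose a coordinate $\beta$ with $x_\beta\neq y_\beta$, separate $x_\beta$ and $y_\beta$ by disjoint open sets in $Y_\beta$, and pull these back along $\pi_\beta$. This gives disjoint open sets in the product, so the product of the witness spaces $Y_\alpha$ is again Hausdorff.

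The substantive step is showing that each of the three choices of $Q$ is projection-dominated. I will in fact show the stronger statement that each is projection-invariant, which suffices by the remark following Definition \ref{pg6}. Let $C\subseteq\prod_\alpha X_\alpha$ have property $Q$. Then $\pi_\alpha|_C : C\to \pi_\alpha(C)$ is a continuous surjection, so it is enough that each of the three properties is preserved by continuous surjections.

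\begin{itemize}
\item \emph{Compactness.} The continuous image of a compact space is compact.
\item \emph{Countable compactness.} Pull back a countable open cover of the image, extract a finite subcover in $C$, and push it forward.
\item \emph{Pseudocompactness.} Here I must be careful, because the spaces are not assumed Tychonoff. I will use the definition that every real-valued continuous function is bounded, which is the one used in the proofs of Theorems \ref{pg23} and \ref{pg24}. If $g:\pi_\alpha(C)\to\mathbb{R}$ is continuous, then $g\circ\pi_\alpha|_C$ is continuous on $C$, hence bounded, and so $g$ is bounded. Thus $\pi_\alpha(C)$ is pseudocompact.
\end{itemize}

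In each case we may therefore take $C_\alpha=\pi_\alpha(C)$ in the proof of Theorem \ref{pg7}.

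With these hypotheses in place, Theorem \ref{pg7} applies verbatim. For each $\alpha$ let $f_\alpha : X_\alpha\to Y_\alpha$ be a witness map, and let $h=\prod_\alpha f_\alpha$ be the product map. The set $C$ lies in $\prod_\alpha \pi_\alpha(C)$, and $h$ restricted to this product is the product of the homeomorphisms $f_\alpha|_{\pi_\alpha(C)}$, hence a homeomorphism. Restricting further to $C$ gives a homeomorphism $C\to h(C)$. So $h$ is a witness map onto the Hausdorff space $\prod_\alpha Y_\alpha$.

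I expect the only real obstacle to be the pseudocompact case. One must make sure that projection-invariance does not rely on any Tychonoff assumption, and that the definition of pseudocompactness used for arbitrary subspaces matches the one used elsewhere in the paper. The argument via composition with $\pi_\alpha|_C$ handles this without separation axioms. The remaining points, namely the productivity of Hausdorffness and the fact that a product of homeomorphisms is a homeomorphism (the Exercise 8C of \cite{munkers} used in Theorem \ref{pg7}), are routine.
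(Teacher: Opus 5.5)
Your proposal is correct and follows the paper's route: the paper obtains this theorem by specializing Theorem \ref{pg7} to $P=$ Hausdorff and $Q=$ compact, countably compact, or pseudocompact. You also verify the two hypotheses the paper leaves implicit, namely that Hausdorffness is productive and that each $Q$ is projection-invariant (hence projection-dominated) because it is preserved under continuous surjections; this is exactly what the citation requires.
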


As a particular case of the above theorem \ref{pg13}, we state below the following corollaries without proof.
\begin{corollary}
$C$-Hausdorff, $Ps$-Hausdorff, $CC$-Hausdorff are additive property.
\end{corollary}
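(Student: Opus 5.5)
The statement is a specialization of Theorem~\ref{pg13}: take $P$ to be Hausdorff and $Q$ in turn to be compactness, pseudocompactness and countable compactness. The plan is to verify the hypotheses of that theorem for each choice. Where one hypothesis fails (pseudocompactness), the proof is redone directly.

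\textbf{Step 1: the property $P$.} Hausdorffness is hereditary. It is also preserved under arbitrary direct sums, since two points in different summands are separated by the summands themselves, which are clopen. So $P$ is invariant under direct sums of every cardinality, in particular of cardinality $<l(Q)$.

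\textbf{Step 2: the property $Q$.} Compactness, countable compactness and pseudocompactness are absolute properties, as recorded before Theorem~\ref{pg12}. Compactness and countable compactness are closed-hereditary, hence clopen-hereditary. Pseudocompactness is regular-closed-hereditary (Remark~\ref{rem0}); a clopen subset is regular closed, so it is clopen-hereditary as well. In each case a $Q$-subspace $C$ of $\bigoplus_\alpha X_\alpha$ meets only finitely many summands. Indeed, if $C$ met infinitely many, the traces $C\cap X_\alpha$ would form an infinite clopen partition of $C$. For compactness and countable compactness this contradicts the property directly. For pseudocompactness one builds the unbounded function taking the value $n$ on the $n$-th piece, which is continuous because the pieces are clopen. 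So $l(Q)=\omega_0$ in all three cases, although only the hypotheses of Theorem~\ref{pg13} are really needed.

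\textbf{Step 3: conclusion.} Theorem~\ref{pg13} now gives that the pasting $h=\bigoplus f_\alpha$ of the witness maps is a witness map from $\bigoplus X_\alpha$ onto $\bigoplus Y_\alpha$. Moreover $\bigoplus Y_\alpha$ is Hausdorff by Step~1. Hence each of the three classes is additive. For the converse direction, each $X_\alpha$ is a subspace of the sum, so the heredity corollary already stated in this section gives that each summand inherits the property.

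The main obstacle is the pseudocompact case. Throughout, the paper uses the non-Tychonoff notion ``every real-valued continuous function is bounded''. So I must check directly that a clopen piece $C\cap X_\alpha$ of a pseudocompact $C$ is pseudocompact. Given an unbounded continuous $g$ on the piece, extend it by $0$ on the complementary clopen set; the extension is continuous and unbounded on $C$, a contradiction. If one prefers to avoid the clopen-hereditary formalism altogether, the restriction $h|_C$ can be checked directly. It is a homeomorphism because it is one on each of the finitely many clopen pieces, and these pieces map onto clopen pieces of $h(C)$.
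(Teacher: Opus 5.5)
Your proposal is correct and takes the same route as the paper, which states this corollary without proof as the special case of Theorem~\ref{pg13} with $P$ Hausdorff and $Q$ taken to be compactness, countable compactness or pseudocompactness. Your additional checks fill in details the paper does not spell out. The extension-by-zero argument shows that clopen pieces of a pseudocompact set are pseudocompact, so no hypothesis actually fails in that case. You also observe that the global witness $\bigoplus_\alpha Y_\alpha$ is Hausdorff because Hausdorffness survives arbitrary sums.
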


\section{Examples}

\begin{example}\label{kcnch}{\textbf{$KC$ but not $C$-Hausdorff:}}

It is well known that one-point compactification of rationals $\mathbb{Q}$ is compact, $T_1$ but not $T_2$. It is also $KC$-space. So by Theorem \ref{pg9}, $X$ is not $C$-Hausdorff.

\end{example}
\begin{example}\label{chncch}{\textbf{$C$-Hausdorff but not $CC$-Hausdorff: }}\\
We first observe that if $X$ is a space satisfying the following properties, 

\begin{enumerate}[label=\roman*.]
\item $X$ is countably compact but not compact.
\item $X$ is $T_1$ but not Hausdorff

\item Every compact subset of $X$ is finite.
\end{enumerate}
Then $X$ is $C$-Hausdorff but not $CC$-Hausdorff.\\

In fact, we take $X$ with the discrete topology as a witness for $C$-Hausdorffness. Then the identity map acts as witness map. On the other hand, if $X$ is countably compact, then by Theorem \ref{pg9}, $X$ is not $CC$-Hausdorff.\\

Now we produce an example of a space $X$ which satisfies the above three properties. 

Let
\[
\mathbb N^*=\beta \mathbb N\setminus \mathbb N
\]
be the remainder of the Stone--Čech compactification of the natural numbers endowed with the discrete topology.

Frolík, in \cite{f60} showed that there exists a countably compact subset of $\mathbb{N}^*$ of cardinality $c$ which is dense in $\mathbb{N}^*$. We denote the space by $D$.

Let $X = D\cup \{ \infty\}$ be its one point extension, where the family of all open subsets of $D$ together with all subsets of $X$ containing $\infty$ whose compliment is closed separable subsets of $D$ forms a base for open sets in $X$.

Then $X$ satisfies the above three properties.

\noindent $\bullet$  \textbf{X is countably compact but not compact:}

Let E be a countable closed discrete subspace of $X$. Then $E \cap D$ is countable closed discrete subspace of $D$. As $D$ is countably compact, $E\cap D$ is finite. Therefore $E = (E \cap D) \cup \{\infty\}$ is finite. $X$ is countably compact. 

Next we show that $X$ is not compact. Suppose $X$ is compact. As $D$ is infinite. it contains countably infinite subset $N$. Then $cl_D N$ being a closed separable subspace of $D$ is closed in $X$ and hence is compact which is compact in $\mathbb{N}^*$ also. But cardinality of any compact subset of $\mathbb{N}^*$ is $2^c$ which leads to a contradiction as cardinality of $D$ is $c$. 

\noindent$\bullet$ \textbf{Every compact subset of $X$ is finite:}

Let $E$ be a compact subset of $X$ which is infinite. Then $E$ contains a countably infinite set $N$. Then by previous arguments, $cl_D N$ is a closed subset of $X$. $cl_D N \cap E$ is closed subset of $E$ and is therefore compact. Again by previous argument, it is finite which is not true as it contains $N$. So every compact subset of $X$ is finite.

Finally,

\noindent$\bullet$ \textbf{$X$ is $T_1$ but not $T_2$: }

That $X$ is $T_1$ trivially follows from the fact that every singleton set is closed in $X$. We show now that $X$ is not $T_2$. Suppose $\infty$ and a point $p \in D$ are strongly separated by open sets. So there exists an open set $U$ in $D$ which is contained in a closed separable subspace of $D$. Then $U$ is an open separable subspace of $D$. Now there exists an open set $V$ in $\mathbb{N}^*$ such that $V \cap D = U$. As $D$ is dense in $\mathbb{N}^*$, $V$ turns out to be an open separable subspace of $\mathbb{N}^*$ which is not true as $\mathbb{N}^*$ does not contain any open separable subspace. So $X$ is not $T_2$
\end{example}

\begin{example}\label{cchnpsh}{\textbf{$CC$-Hausdorff but not $Ps$-Hausdorff:}}

Consider the set $ \mathbb{R} $ equipped with the co-countable topology $ \tau_c $. Then any countably compact subset of $(\mathbb{R}, \tau_c)$ is finite as any infinite subset of it contains a countably infinite closed discrete subspace, opposing it to be countably compact. So by the Lemma \ref{pg13.1}, the space $ (\mathbb{R}, \tau_c) $ is $CC$-Hausdorff.

Since $(\mathbb{R}, \tau_c)$ is an irreducible space (that is, where no two nonempty open sets are disjoint), it is a non-Hausdorff pseudocompact. Therefore  by the Theorem \ref{pg9}, $(\mathbb{R}, \tau_c) $ is not $Ps$-Hausdorff.

\end{example}

\begin{example}\label{pshnh}{\textbf{$Ps$-Hausdorff but not Hausdorff:}} 

Let $ X = \mathbb{R} \cup \{i, j\} $, where $ i, j \notin \mathbb{R} $. 
Define a topology on $ X $ as follows: each point of $ \mathbb{R} $ is isolated. 
Every basic open neighbourhood  of either $ i $ or $j$ is cocountable.  

It is clear that $ X $ satisfies the $ T_1 $ axiom, but it is not Hausdorff.

We now show that every pseudocompact subset of $ X $ must be finite. 
Let $ A \subseteq X $ be infinite.

First, suppose $ i, j \notin A $. Then $ A $ is an infinite discrete subspace, 
so one can construct an unbounded continuous real-valued function on $ A $, 
showing that $ A $ is not pseudocompact.

Next, assume that either $ i$ or $j$ in A . With out loss of generality, we assume $i \in A$. Choose a countably infinite subset 
$ B = \{a_n : n \in \mathbb{N}\} \subseteq A $ with $ i, j \notin B $. 
Define a function $ f : A \to \mathbb{R} $ by
$$
f(a_n) = n \text{ for each } n \in \mathbb{N}, \quad \text{and } f(x) = 0 \text{ otherwise.}
$$

All points of $ A \setminus \{i, j\} $ are isolated, so $ f $ is continuous there. 

By construction, $f(i) = 0$. Choose a small neighbourhood $U$ of $0$ such that  $U\cap \{n: n \geqq 1\} = \emptyset$ in $\mathbb{R}$. Then take $V = X\setminus B$. which is an open neighbourhood of $i$ ( and $j$ as well). By construction, $f(V) = \{0\} \subseteq U$. So $f$ is continuous at $i$.  Therefore $f$ is continuous on $A$ which is unbounded. This is a contradiction as $A$ is pseudocompact. 
Thus a subset of $X$ is pseudocompact if and only if it is finite.

So by the Theorem \ref{pg13.1}, it follows that $X$ is $Ps$-Hausdorff. 
\end{example}

\begin{example}\label{psh,psr,psn but not lem}\textbf {In support of the Remark \ref{rem0}}: Consider the $K$-Topology or Smirnov's deleted sequence topology on $[0.1]$. $[0,1]$ with $K$-topology is denoted by $[0,1]_K$ and simply by $[0,1]$ with respect to usual topology. The base for open sets in $[0,1]_K$ are of the form $U\setminus B$, where $U$ is open in usual topology on $[0,1]$ and $B\subset \{\frac{1}{n}: n \in \mathbb{N}\} (= A)$. As is mentioned in Example 64 of \cite{ss95}, it is not regular and therefore not Tychonoff, but it is Hausdorff. As the usual topology on $[0,1]$ is coarser ( take $B=\emptyset$) than the $K$-topology, $[0,1]_K$ is submetrizable and for any subset $A$ of $[0,1]$ $x \in \operatorname{cl}_{[0,1]_K} A \implies x \in \operatorname{cl}_{[0,1]} A$.

However, if $x \neq 0$, the converse is also true. In fact, let $x \in \operatorname{cl}_{[0,1]} A$ and $x \neq 0$. 
Let $(a,b) \setminus B$ be a basic open neighborhood of $x$ in $[0,1]_K$, where $B \subseteq A$.

Choose a sufficiently small open interval $(c,d)$ with $a < c < d < b$ such that $x \in (c,d)$ and $(c,d) \cap B = \emptyset$. Since $(c,d) \setminus B = (c,d)$.

we have,
\[
x \in (c,d) \subseteq (a,b) \setminus B.
\]

As $x \in \operatorname{cl}_{[0,1]} A$,
\[
(c,d) \cap A \neq \emptyset \implies ((a,b) \setminus B) \cap A \neq \emptyset.
\]
So,
\[
x \in \operatorname{cl}_{[0,1]_K} A.
\]

\vspace{1em}
\noindent
For $x = 0$, the above fact is not true as $0 \in \operatorname{cl}_{[0,1]} A$ but by construction, $A$ is closed in $[0,1]_K$. Therefore, $0 \notin \operatorname{cl}_{[0,1]_K} A$.

\vspace{1em}
\noindent
Thus, we observed that if $0 \notin \operatorname{cl}_{[0,1]_K} A$, $\operatorname{cl}_{[0,1]_K} A$ is also closed in $[0,1]$.

Since $A$ is countable closed discrete subspace of $[0.1]_K$, $[0.1]_K$ is not countably compact and hence not compact. 

It is pseudocompact. Indeed, take any decreasing sequence of non-empty open sets $\{V_n\}$. If $0\in \operatorname{cl}_{[0,1]_K}  V_n$, for all $n$, then $\cap_n \operatorname{cl}_{[0,1]_K} V_n \neq \emptyset$. 

As the sequence is decreasing, if $0\notin \operatorname{cl}_{[0,1]_K}  V_n$ for some $n$, $0\notin \operatorname{cl}_{[0,1]_K}  V_m$ for all $m \geqq n$. So $\operatorname{cl}_{[0,1]_K} V_m$ are also closed in $[0,1]$ for all $m \geqq n$. As $[0,1]$ is compact,  $\cap_{m\geqq n} \operatorname{cl}_{[0,1]_K} V_m \neq \emptyset$.  Because of decreasing in nature, $\cap_m \operatorname{cl}_{[0,1]_K} V_m = \cap_{m\geqq n} \operatorname{cl}_{[0,1]_K} V_m$. Thus,  $\cap_n \operatorname{cl}_{[0,1]_K} V_n \neq \emptyset$. Therefore $[0.1]_K$ is pseudocompact by sufficient part of Lemma 9.13, in \cite{gj60} which does not require assumption of Tychonoffness condition unlike the neccessary part of the same lemma. 

Now the identity map $I:[0,1]_K \to [0,1]$ is continuous bijection but not homeomorphism as $[0,1]_K$ is pseudocompact but not compact, while $[0,1]$ is compact. However, by Theorem \ref{pg1.1}, it is $Ps$-Hausdorff, but by Theorem \ref{pg9}, it is not $Ps$-regular, $Ps$-normal, $Ps$-Tychonoff. 
    
\end{example}

\textbf{Acknowledgement} The authors would like to thank Prof. Alan Dow, Department of Mathematics and Statistics, University of North Carolina at Charlotte, Charlotte, NC 28223, USA, for providing an outline of Example \ref{chncch} through personal communication.

\bibliographystyle{plain}

\end{document}